\documentclass[11pt]{amsart}
\usepackage{amssymb}
\usepackage{amsthm}
\usepackage{euscript}

\usepackage{graphicx}
\usepackage{bbding}
\usepackage[misc]{ifsym}
 \usepackage[usenames,dvipsnames]{color}

\newcommand{\cB}{\EuScript{B}}

\theoremstyle{plain}
\newtheorem{lem}[subsection]{Lemma}

\newtheorem{thm}[subsection]{Theorem}

\newtheorem{cor}[subsection]{Corollary}

\begin{document}
\title[Generators of full  algebras of operators]{Fully non-zero matrices and generators of full  algebras of operators}

\thanks{${}^1$ Research supported in part by NSERC (Canada).}
\thanks{${}^2$ Research supported in part by National Natural Science Foundation of China (No.: 12471123).}

\author
	[L.W. Marcoux]{{Laurent W.~Marcoux${}^1$}}
\address
	{Department of Pure Mathematics\\
	University of Waterloo\\
	Waterloo, Ontario \\
	Canada  \ \ \ N2L 3G1}
\email{LWMarcoux@math.uwaterloo.ca}


\author
	[H. Radjavi]{{Heydar Radjavi}}
\address
	{Department of Pure Mathematics\\
	University of Waterloo\\
	Waterloo, Ontario \\
	Canada  \ \ \ N2L 3G1}
\email{hradjavi@uwaterloo.ca}


\author
	[B.R. Yahaghi]{{Bamdad R. Yahaghi}}
\address
	{Department of Mathematics, Faculty of Sciences\\
	Golestan University\\
	Gorgan 19395-5746\\
	 Iran}
\email{bamdad5@hotmail.com, bbaammddaadd55@gmail.com}

\author
	[Y.H.~Zhang]{{Yuanhang~Zhang${}^2$}}
\address
	{School of Mathematics\\
	Jilin University\\
	Changchun 130012\\
	P.R. CHINA}
\email{zhangyuanhang@jlu.edu.cn}



\bibliographystyle{plain}

\begin{abstract}
In a wide variety of fields,  every non-scalar matrix $M$ is  similar to a matrix all of whose entries are non-zero.   We give extensions of this result, including some in the division ring context.   We then apply these results to the question of which pairs of operators generate the algebra of all operators on finite- and infinite-dimensional spaces.   In the case of complex, separable infinite-dimensional Hilbert space, this is intimately related to the unsolved Invariant Subspace Problem. Finally, linear independence of subsets of arbitrary TVS's and generators of arbitrary TA's are shown to be stable in the sense of the theorems presented.
\end{abstract}


\keywords{fully non-zero matrices, generators of full matrix algebras, generators of $\mathcal{B}(\mathcal{H})$}
\subjclass[2010]{Primary: 15B33. Secondary: 47A65}

\maketitle
\markboth{\textsc{  }}{\textsc{}}

\bigskip

\begin{center} \emph{To the memory of our good friends and colleagues, Eric Nordgren and Peter Rosenthal}
\end{center}

\bigskip

\section{Introduction} \label{sec01}

\bigskip

One topic of discussion in this paper is that of ``\emph{fully non-zero}" matrices.   Extending some known results, we show that under very general conditions on the underlying field (or suitable division ring), every non-scalar operator has a matrix (in some basis) whose entries are all non-zero.

The infinite-dimensional analogue of this result is known for (bounded linear) operators on complex, separable Hilbert space~\cite{RadjaviRosenthal1970}. We extend it  to the quaternionic setting.

The second topic, which as will be seen is closely related to the first,  is that of a small number of generators for an algebra of operators.   Of special interest is the question of when a pair of operators of very simple structure on a given linear space $\mathcal{V}$ generates the algebra of all operators on $\mathcal{V}$.  If $\mathcal{V}$ is an infinite-dimensional complex Hilbert space $\mathcal{H}$, which is of particular interest, the statement that ``$A$ and $B$ generate all operators" means, of course, that the weakly closed algebra generated by $A$ and $B$ is the algebra $\mathcal{B}(\mathcal{H})$ of all bounded linear operators on $\mathcal{H}$.

By ``simple structure" mentioned above we mean, for example, quadratic or cubic operators; i.e. operators satisfying equations $A^2 + \alpha A + \beta I = 0$, or $A^3 + \alpha A^2 + \beta A + \gamma I = 0$.   It is known~\cite{GLS} that two quadratic operators cannot generate the algebra of all $n \times n$ complex matrices if $n \ge 3$.  This makes it interesting that one quadratic and one cubic can do the job, even in infinite dimensions.

It is worthwhile to mention the intimate relationship between this generation question and the well-known and still unsolved Invariant Subspace Problem:  does every bounded linear operator on a (separable, complex, infinite-dimensional) Hilbert space $\mathcal{H}$ have an invariant (closed) subspace other than the trivial ones $\{ 0\}$ and $\mathcal{H}$?

This stubborn problem is equivalent~\cite{NordgrenRadjabalipourRadjaviRosenthal1988} to the easier-sounding question:  does every pair of quadratic operators on $\mathcal{H}$, for example $A$ and $B$ satisfying
\[
A^2 = I = B^2, \]
have a common non-trivial invariant subspace?  Without loss of generality, one of them can be assumed to be self-adjoint.   If $A=A^*$ and $B=B^*$, then the pair $\{ A, B\}$ has an abundance of invariant, actually \emph{reducing} subspaces in common~\cite[Theorem 1]{GZ}.   This implies that the pair is very far from being a generator for the algebra $\mathcal{B}(\mathcal{H})$.   But if there exists  a generating pair $\{ A, B \}$ of quadratic operators,  then the Invariant Subspace Problem is solved in the negative~\cite{NordgrenRadjabalipourRadjaviRosenthal1988}.

It is interesting that if a very slight deviation from quadraticity is allowed, we still get an affirmative result in the self-adjoint case; i.e., as a routine consequence of Theorem~\ref{thmJune9}, one may exhibit a pair $\{ A, B \}$ of self-adjoint generators for $\mathcal{B}(\mathcal{H})$, where $A^2 = I = B^3$, and $(B+F)^2=I$ for
some rank-one operator $F$.

In the finite-dimensional setting, we characterise all ``quadratic-cubic" pairs of self-adjoint generators for the full algebra $\mathbb{M}_n(\mathbb{C})$.   We also present stability results for generators of more general algebras.

\bigskip

\section{Preliminary results} \label{sec02}

\bigskip

A matrix $ A \in \mathbb{M}_n(F)$ is said to be {\it fully nonzero} if its entries are all nonzero; it is called a {\it fully nonzero matrix up to a similarity} if
$ P^{-1} A P $ is fully nonzero for some $ P \in \mathbb{M}_n(F)^{-1}$.


The following useful lemma is well-known. We provide an alternative proof, which is motivated by \cite[Lemma 3.1.1]{RadjaviRosenthal2000}, from the operator theory point of view.

\begin{lem} \label{L:subfield}
Let  $F$ be a field with characteristic zero and $\{ 0 \} \ne S \subseteq F$ a non-empty, countable subset of $F$. Then,   the subfield generated by  $S $ is isomorphic to a subfield of the complex numbers.
\end{lem}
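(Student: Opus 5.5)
Since $F$ has characteristic zero, its prime subfield is a copy of $\mathbb{Q}$, and the subfield $K$ generated by $S$ is $\mathbb{Q}(S)$. Because $S$ is countable, $K$ is a countable field: every element is a rational expression in finitely many elements of $S$ with rational coefficients, and there are only countably many such expressions. The plan is to build a field embedding $K \hookrightarrow \mathbb{C}$ in two stages. First we embed a purely transcendental part using the fact that $\mathbb{C}$ has uncountable transcendence degree over $\mathbb{Q}$. Then we extend the embedding across the algebraic part using the algebraic closedness of $\mathbb{C}$.

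\emph{Step 1 (transcendental part).} Choose a transcendence basis $T$ of $K$ over $\mathbb{Q}$. One can extract it from $S$: enumerate $S=\{s_1,s_2,\dots\}$ and keep $s_j$ whenever it is algebraically independent of the previously kept elements. Then $T$ is countable and $K$ is algebraic over $\mathbb{Q}(T)$.

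Next choose algebraically independent complex numbers $\{z_t : t\in T\}$. This is possible because $\mathbb{C}$ is uncountable while the algebraic closure of $\mathbb{Q}(z_1,\dots,z_k)$ is countable for each $k$. So, inductively, pick $z_{k+1}$ outside the algebraic closure of $\mathbb{Q}(z_1,\dots,z_k)$. Sending $t\mapsto z_t$ gives a ring isomorphism $\mathbb{Q}[T]\to\mathbb{Q}[z_t]$ by algebraic independence, and it extends to an embedding $\varphi_0:\mathbb{Q}(T)\hookrightarrow\mathbb{C}$.

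\emph{Step 2 (algebraic part).} Enumerate the countable field as $K=\{k_1,k_2,\dots\}$ and set $L_n=\mathbb{Q}(T)(k_1,\dots,k_n)$. We extend $\varphi_{n-1}:L_{n-1}\to\mathbb{C}$ to $L_n=L_{n-1}(k_n)$ as follows. Let $p$ be the minimal polynomial of $k_n$ over $L_{n-1}$. Its image $p^{\varphi_{n-1}}$ is irreducible over $\varphi_{n-1}(L_{n-1})$, so choose a root $w\in\mathbb{C}$ of it, which exists because $\mathbb{C}$ is algebraically closed. Then define $\varphi_n$ via
\[
L_{n-1}(k_n)\cong L_{n-1}[x]/(p)\cong \varphi_{n-1}(L_{n-1})[x]/(p^{\varphi_{n-1}})\cong \varphi_{n-1}(L_{n-1})(w).
\]
The union $\varphi=\bigcup_n\varphi_n$ is a well-defined field embedding $K\hookrightarrow\mathbb{C}$, so $K\cong\varphi(K)$, a subfield of $\mathbb{C}$.

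The main obstacle is Step 1. The choice of algebraically independent complex numbers is exactly where the countability hypothesis on $S$ is used, through a cardinality comparison: countably many algebraic closures of countable fields cannot exhaust $\mathbb{C}$. Step 2 is routine. An alternative would be to cite the Steinitz/Zorn extension theorem, but the sequential construction above avoids Zorn's lemma, which fits the countable setting.
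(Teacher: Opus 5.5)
Your proof is correct and is essentially the paper's argument. Both build the embedding into $\mathbb{C}$ by a countable induction, adjoining either a complex number transcendental over the (countable) image field or a complex root of the transported minimal polynomial, and then take the union. The only differences are organizational: the paper interleaves the two cases while walking through an enumeration of $S$, whereas you first embed $\mathbb{Q}(T)$ for a transcendence basis $T\subseteq S$ and then handle the algebraic part. You also state explicitly the cardinality argument for choosing the transcendental element, which the paper leaves implicit.
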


\begin{proof}
Enumerate $S\cup\{1\}=\{s_j\}_{j=0}^N$, where $N\in \mathbb{N}\cup\{\infty\}$, and $s_0=1$. Without loss of generality,
we may assume that $S\cup\{1\}$ is linearly independent over $\mathbb{Q}$.

For $k\leq N$, let $F_k$ be the subfield of $F$ generated by $\{s_j\}_{j=0}^k$; here
$k< \infty$ simply means $ k\in \mathbb{N}$. Observe that $F_0\cong \mathbb{Q}$. We therefore
obtain one of the following two nests of field extensions, namely:
$$\mathbb{Q}\cong F_0 \leq F_1 \leq F_2 \leq \cdots \leq F_N, $$
or
$$ \mathbb{Q}\cong F_0 \leq F_1 \leq F_2 \leq \cdots \leq F_k \leq \cdots, $$
depending on whether or not $ N \in  \mathbb{N}$.
Now $F_k=F_{k-1}(s_k)$, $k\geq 1$. We argue by induction on $k$ that each $F_k$
is isomorphic to a subfield $G_k$ of $\mathbb{C}$.

Indeed, setting $G_0=\mathbb{Q}$, we have $G_0\cong F_0$.
For $k\geq 1$, if $s_k$ is algebraic over $F_{k-1}$, then
$F_k\cong G_k$, where $G_k$ is the subfield of $\mathbb{C}$
obtained by adjoining to $G_{k-1}$ a complex root $r_k$ of the
minimal polynomial for $s_k$ over $F_{k-1}$. If $s_k$ is transcendental over $F_{k-1}$, then
$F_k\cong G_k$, where $G_k$ is obtained from $G_{k-1}$ by adjoining any complex, transcendental number algebraically independent of $G_k$.  Set $G = \cup_{k=0}^N G_k$.

Thus,
 $$\mathbb{Q}= G_0 \leq G_1 \leq G_2 \leq \cdots \leq G_N\subseteq \mathbb{C}, $$
or
$$ \mathbb{Q}=G_0 \leq G_1 \leq G_2 \leq \cdots \leq G_k \leq \cdots\subseteq \mathbb{C}, $$
where $G_k\cong F_k$ for all $k$. It follows that
\[F\cong\bigcup_{k=0}^N G_k = G\subset \mathbb{C}.\]
\end{proof}

We also need the following simple lemma.

\begin{lem} \label{L:sum of nonzero elements}
Let  $D$ be a division ring and $2\leq  n \in \mathbb{N}$.
\begin{enumerate}
\item[(a)] If $D$ contains at least $n+2$ elements, then every nonzero element of $D$ can be written as the sum of $n$ distinct nonzero elements of $D$.
\item[(b)] If the characteristic of $D$ is not equal to $2$ and $D$ has at least $2n$ elements, then we may also express $0$ as a sum of $n$
distinct nonzero elements of $D$.
\item[(c)]  If the characteristic of $D$ is equal to $2$, $n\geq 3$ and $D$ has at least $2n$ elements, then we may also express $0$ as a sum of $n$
distinct nonzero elements of $D$.
\end{enumerate}

\end{lem}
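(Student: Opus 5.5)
The plan is a greedy choice of all but the last summand, followed by a count of how many values are forbidden for the last free choice. When the greedy count is not good enough, $D$ is finite, hence a finite field by Wedderburn's theorem, and I would pass to complements.

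\emph{Greedy step.} Fix a target $a \in D$ and $k \ge 2$. Choose distinct nonzero $x_1,\dots,x_{k-2}$ with partial sum $t$. Then choose $y$ and set the last summand equal to $a-t-y$. The conditions needed are: $y \neq 0$; $y \neq x_i$; $a-t-y \neq 0$; $2y \neq a-t$; and $y \neq a-t-x_i$. These exclude at most $2k-1$ values of $y$. So if $|D| \ge 2k$ a good $y$ exists, with one caveat. In characteristic $2$ the condition $2y\neq a-t$ reads $a\neq t$, which does not involve $y$ at all. There I would first arrange $t\neq a$, by changing one of the $x_i$; this is possible since there is room once $|D|\ge 2k$. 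This settles (b) and (c) directly, taking target $0$ and $k=n$ with $|D|\ge 2n$.

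\emph{The case $k=2$.} Here the step needs $y\ne0$, $y\ne a-t$ and $2y\ne a-t$. In characteristic $2$ with $t=a$ it fails outright, which is exactly why $n\ge3$ is assumed in (c). For (b) with $k=2$ one takes $\{x,-x\}$ directly.

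For (a), the greedy step covers $|D|\ge 2n$. The remaining range $n+2\le |D|\le 2n-1$ forces $D=\mathbb{F}_q$ finite with $q\ge n+2$. There I would use that the sum of all nonzero elements of $\mathbb{F}_q$ is $0$, since $q>2$. Hence an $n$-subset of $\mathbb{F}_q^\times$ sums to $a$ if and only if its complement, a set of $k=q-1-n\ge 1$ elements, sums to $-a\ne 0$.
\begin{itemize}
\item If $k=1$, the complement $\{-a\}$ works.
\item If $k\ge2$, then $q-1=n+k$. Since $q\le 2n-1$ we have $k\le n-2$, so $q\ge 2k+3$. This is enough room for the greedy step with target $-a$ and $k$ summands.
\end{itemize}
In characteristic $2$ with $k=2$, the excluded value coming from $2y=-a-t$ does not arise, because $t=0$ and $-a\neq 0$.

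The main obstacle is the bookkeeping in the small finite-field range and in characteristic $2$. One has to make sure the exclusion count $2k-1$ is really below the number of available elements. One also has to check that the complement trick keeps the target nonzero, so that the degenerate condition $a=t$ cannot occur. I would verify these boundary cases explicitly, for example $q=n+2$, rather than rely on the general count.
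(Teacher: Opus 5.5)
Your proposal is correct. For (b) and (c) it is essentially the paper's argument: the paper also fixes $n-2$ distinct nonzero elements with nonzero partial sum $z_0$, picks $x_{n-1}$ outside an explicit set of at most $2n-1$ (resp.\ $2n-2$) forbidden values, and sets $x_n=-(z_0+x_{n-1})$. The paper gets $z_0\neq 0$ from part (a), whereas you perturb one $x_i$.

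The genuine difference is in (a). The paper uses the multiplicative structure. It takes any $n$ distinct nonzero elements, and if their sum is $0$ it replaces one of them by an element outside $\{0,x_1,\dots,x_n\}$; this is exactly where $|D|\ge n+2$ is used, and it makes the sum $x$ nonzero. Left multiplication by $zx^{-1}$ then carries this $n$-set to one summing to any prescribed $z\neq 0$, preserving distinctness and non-vanishing. That is one line, uniform in the characteristic, and reaches the sharp bound $n+2$ with no case split.

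Your route stays purely additive, so your greedy count only reaches $|D|\ge 2n$. You then have to close the gap $n+2\le |D|\le 2n-1$ by passing to finite fields and complements. This works: your inequality $q\ge 2k+3$ is right, and the nonzero target $-a$ does avoid the degenerate characteristic-$2$ case. It just costs extra bookkeeping.

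Wedderburn's theorem is not actually needed there. In any finite division ring with more than two elements, the sum $s$ of the nonzero elements satisfies $cs=s$ for every $c\neq 0$. Taking $c\neq 0,1$ gives $(c-1)s=0$, so $s=0$. Your greedy step never uses commutativity, so the complement argument runs in $D$ directly.
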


\begin{proof}
\begin{enumerate}
\item[(a)] Suppose that $D$ is a division ring with at least $n+2$ elements. Pick $n$ distinct nonzero elements of $D$, say, $ x_1, \ldots, x_n$.
If $ x_1 + \cdots + x_n= 0$,
choose $y\in D\setminus \{0,x_1,x_2,\cdots,x_n\}$; otherwise, set $y=x_n$.
Either way, $\{x_1,x_2,\cdots, x_{n-1},y\}$ has cardinality $n$ and
\[x:=x_1+x_2+\cdots+x_{n-1}+y\neq 0.\]
Given $0\neq z\in D$,
\[z=(zx^{-1})x=\sum_{j=1}^{n-1}(zx^{-1})x_j+(zx^{-1})y,\]
and thus is a sum of $n$ distinct nonzero elements of $D$.

\item[(b)]

Suppose now that  $D$ has at least $2n$ elements, and that the characteristic of $D$ is not equal to 2.
If $n=2$, then as $1\neq -1$, $0$ is the sum of $1$ and $-1$.
Hence, we may assume that $n\geq 3$.
By part (a) above, we can find $n-2$
distinct nonzero elements  $ x_1 , \ldots, x_{n-2}$ such that $z_0:= x_1 + \cdots + x_{n-2} \neq 0$.

For $1\leq k\leq n-2$, set
$z_k:=x_k+z_0$.
Then the set
\[E:=\{0, x_1,x_2,\cdots,x_{n-2},-z_0,-z_1,\cdots,-z_{n-2},-(2)^{-1}z_0\}\]
has cardinality at most $2n-1$, and so we may choose
\[x_{n-1}\in D\setminus E.\]
Clearly, $x_1,x_2,\cdots, x_{n-1}$ are $n-1$ distinct nonzero elements of $D$, and for $1\leq k\leq n-2$,
\[x_k+(x_1+x_2+\cdots+x_{n-1})=z_k+x_{n-1}\neq 0,\]
while
\[x_{n-1}+(x_1+x_2+\cdots+x_{n-1})=z_0+2x_{n-1}\neq 0.\]
Set $x_n:=-(\sum_{j=1}^{n-1}x_j)=-(z_0+x_{n-1})\neq 0$.
If $x_n=x_k$ for some $1\leq k\leq n-1$,
then $x_k+\sum_{j=1}^{n-1}x_j=0$,
a contradiction as $z_k+x_{n-1}\neq 0$.
Thus, $x_1,x_2,\cdots,x_n$ are $n$ distinct nonzero elements of $D$ whose sum is $0$.

\item[(c)]
The proof in this case is very similar to that of (b).
As before, we can find $n-2$
distinct nonzero elements  $ x_1 , \ldots, x_{n-2}$ such that $z_0:= x_1 + \cdots + x_{n-2} \neq 0$.
Again, we set $z_k:=x_k+z_0$, $1\leq k\leq n-2$.
We define
\[E':=\{0, x_1,x_2,\cdots,x_{n-2},-z_0,-z_1,\cdots,-z_{n-2}\}.\]
Then $E'$
has cardinality at most $2n-2$, so we can find
\[x_{n-1}\in D\setminus E'.\]
Thus, $x_1,x_2,\cdots, x_{n-1}$ are distinct nonzero elements of $D$, and for $1\leq k\leq n-2$,
\[x_k+(x_1+x_2+\cdots+x_{n-1})=z_k+x_{n-1}\neq 0,\]
while
\[x_{n-1}+(x_1+x_2+\cdots+x_{n-1})=z_0\neq 0.\]
Setting $x_n:=-(\sum_{j=1}^{n-1}x_j)=-(z_0+x_{n-1})\neq 0$, we see by the same
argument as in (b) that
$x_1,x_2,\cdots,x_n$ are distinct nonzero elements of $D$ and
$x_1+x_2+\cdots+x_n=0$, completing the proof.
\end{enumerate}
\end{proof}

\noindent {\bf Remarks.}    1.  It follows from above that if the characteristic of $D$ is not $2$ and  $D$ contains at least $4$ elements, then every element of $D$ can be written as the sum of $2$ mutually distinct nonzero elements of $D$.

2. In contrast, if the characteristic of $D$ is 2, then the zero element of $D$ cannot be written as the sum of $2$ mutually distinct nonzero elements of it.

3. Indeed, in light of \cite[Proposition 1.1(v)]{Yahaghi2025}, the preceding lemma along with the above remarks  holds for alternative division algebras, and hence alternative division rings,  whose characteristics are not $2$.

\bigskip


\section{General results} \label{sec03}

\bigskip

The following is an algebraic counterpart of \cite[Theorem 2]{RadjaviRosenthal1970} for matrices with entries in a general field.


\begin{thm} \label{2.1}

{\rm (i)} Let $ n \in \mathbb{N}$ with $n > 2$, $F$ be any field with at least $ 4n -2$ elements, and $A \in \mathbb{M}_n(F)$ a nonscalar matrix. Then, there exists a $P \in  \mathbb{M}_n(F)^{-1}$  such that the entries of the matrix $ P^{-1} A P$ are all nonzero and its diagonal entries are mutually distinct.

{\rm (ii)} Let $F$ be a field with at least $6$ elements and $A \in \mathbb{M}_2(F)$ a nonscalar matrix. Then, there exists a
$P \in \mathbb{M}_2(F)^{-1}$  such that the entries of the matrix $ P^{-1} A P$ are all nonzero. Unless the characteristic of $F$ is $2$, we can also demand that the diagonal entries of  $ P^{-1} A P$ be mutually distinct.

\end{thm}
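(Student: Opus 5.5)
The plan has two stages. First, use the nonscalarity of $A$ to make the diagonal entries mutually distinct. Then fill in zero off-diagonal entries one at a time by elementary similarities, counting forbidden parameter values so that the hypothesis $|F|\ge 4n-2$ is exactly what is needed.

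\emph{Stage 1 (distinct diagonal).} Since $A$ is nonscalar, there is a vector $x$ with $x$ and $Ax$ linearly independent. I will invoke Fillmore's theorem: a nonscalar matrix over any field is similar to a matrix with any prescribed diagonal $(d_1,\dots,d_n)$ satisfying $\sum d_i=\operatorname{tr}A$. Its proof starts from a basis $x, Ax, \dots$ and proceeds by induction on $n$; I would reprove it or cite it. So it suffices to write $\operatorname{tr}A$ as a sum of $n$ mutually distinct elements of $F$.
\begin{itemize}
\item If $\operatorname{tr}A\neq 0$, this follows from Lemma~\ref{L:sum of nonzero elements}(a), since $4n-2\ge n+2$.
\item If $\operatorname{tr}A=0$ and $n\ge 3$, it follows from Lemma~\ref{L:sum of nonzero elements}(b) or (c), according to the characteristic, since $4n-2\ge 2n$.
\end{itemize}
Either way we obtain $A_0\sim A$ with mutually distinct diagonal entries, all nonzero. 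For $n=2$ the same works except when $\operatorname{char}F=2$ and $\operatorname{tr}A=0$; this is exactly the exception recorded in (ii). In that case I will only require that the diagonal be nonzero, using $d_1=d_2\ne 0$, which is possible since $|F|\ge 6$.

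\emph{Stage 2 (fill off-diagonal zeros).} Let $B$ have mutually distinct, nonzero diagonal entries, and suppose $b_{ij}=0$ for some $i\neq j$. Conjugate by $E_t=I+tE_{ij}$, so that $E_t^{-1}=I-tE_{ij}$. Then $E_t^{-1}BE_t$ differs from $B$ only in row $i$ and column $j$, as follows.
\begin{itemize}
\item The $(i,i)$ entry becomes $b_{ii}-tb_{ji}$ and the $(j,j)$ entry becomes $b_{jj}+tb_{ji}$.
\item For $k\ne i,j$, the $(i,k)$ entry becomes $b_{ik}-tb_{jk}$ and the $(k,j)$ entry becomes $b_{kj}+tb_{ki}$.
\item The $(i,j)$ entry becomes $b_{ij}+t(b_{ii}-b_{jj})-t^2b_{ji}$. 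This polynomial in $t$ is nonzero because $b_{ii}\ne b_{jj}$.
\end{itemize}
Every changed entry is a polynomial in $t$ that is nonzero at $t=0$ (for already nonzero entries), or is not identically zero (for the new $(i,j)$ entry). Hence each condition ``this entry stays or becomes nonzero'' excludes at most one value of $t$, or at most two for the $(i,j)$ entry.

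In addition we need the two changed diagonal entries to remain distinct from each other and from the other $n-2$ diagonal entries, and to remain nonzero. These conditions are linear in $t$ and nontrivial whenever $b_{ji}\neq0$. If $b_{ji}=0$, the diagonal does not move at all, and the $(i,j)$ entry is linear with nonzero slope.

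I would count the forbidden values and show that they number at most $4n-3$: roughly $2(n-2)$ from the off-diagonal entries of row $i$ and column $j$, $2$ from the $(i,j)$ entry, and about $2(n-1)$ from the diagonal conditions. Together with $t=0$, this leaves a valid $t$ because $|F|\ge 4n-2$. Each step strictly reduces the number of zero entries while preserving the distinct nonzero diagonal, so finitely many steps give a fully nonzero matrix.

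The main obstacle I anticipate is making this count tight enough to land at $4n-2$ rather than something larger, by sharing exclusions between conditions and exploiting the fact that $t=0$ is itself already excluded. Justifying Fillmore's theorem over an arbitrary field is the other point needing care. For (ii), with $n=2$, the same step needs only a handful of exclusions, and $|F|\ge 6$ suffices. In the characteristic-$2$, trace-zero case, I would instead start from the companion matrix of $A$ and conjugate by $I+tE_{12}$ directly, avoiding the few bad values of $t$.
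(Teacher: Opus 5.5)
Your proposal is correct and is essentially the paper's proof. Fillmore's theorem together with Lemma~\ref{L:sum of nonzero elements} gives a similar matrix with nonzero, mutually distinct diagonal, and a zero off-diagonal entry is then repaired by conjugating with $I+tE_{ij}$; the paper phrases your iteration as choosing $P$ to maximise the number of nonzero off-diagonal entries, after a permutation so that $(i,j)=(1,2)$.

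Your tally of forbidden values is slightly off, though the total is right. The value $t=0$ is already one of the two roots of the new $(i,j)$ entry, so it should not be added again. The diagonal conditions exclude up to $2(n-2)+3$ values, not $2(n-1)$. The forbidden set therefore has at most $2(n-2)+2+2(n-2)+3=4n-3$ elements including $0$. This is exactly the paper's count $4(n-2)+5$, and it matches the hypothesis $|F|\ge 4n-2$, or $|F|\ge 6$ when $n=2$.
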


\bigskip

\begin{proof}  (i)  Since  $F$ has more than $2n  $ elements, by  \cite[Theorem 2]{Fillmore1969.02} and the Remark at the end of that paper, together with Lemma \ref{L:sum of nonzero elements} above, there  exists a $P \in  \mathbb{M}_n(F)^{-1}$  such that the  diagonal  entries of the matrix $ P^{-1} A P$ are all nonzero and mutually distinct. Pick a $P \in  \mathbb{M}_n(F)^{-1}$ such that the  diagonal  entries of the matrix $ P^{-1} A P$ are all nonzero and mutually distinct and that the number of its nonzero off diagonal entries is maximal. We claim that $P \in \mathbb{M}_n(F)^{-1}$ is a desired matrix. Suppose, by way of contradiction, not. If necessary applying similarities coming from permutation matrices, we may assume that the $(1,2)$-entry of $ Q := P^{-1} A P$ is zero. Let
$$ Q = \left(
\begin{array}{cc}
 Q_{11} & Q_{12} \\
 Q_{21} & Q_{22}
\end{array} \right), $$
where $ 0 \not= Q_{11} = \left(\begin{array}{cc}
 a & 0 \\
 b & c
\end{array} \right) \in \mathbb{M}_{2} ( F) $, $ Q_{12}$,  $ Q_{21} $, and $ 0 \not= Q_{22} $ are suitable block matrices.
Let
$$R_0:=  \left(\begin{array}{cc}
 1 & z \\
 0 & 1
\end{array} \right), \  R := R_0 \oplus I_{n-2} \in  \mathbb{M}_n(F), $$
where $ z \in F$.  We have
$$ R^{-1} Q R =\left(\begin{array}{cc}
R_0^{-1} Q_{11} R_0 & R_0^{-1} Q_{12} \\
 Q_{21} R_0 & Q_{22}
\end{array} \right),  $$
where
$$  R_0^{-1} Q_{11} R_0 = \left(\begin{array}{cc}
 a - zb & - b z^2 + (a- c)z\\
 b & bz + c
\end{array} \right) \in \mathbb{M}_{2\times 2} (F),$$
$$  R_0^{-1} Q_{12} =\left(\begin{array}{cc}
 \alpha_1 - z\alpha_2  \\
 \alpha_2
\end{array} \right)    \in \mathbb{M}_{2\times (n-2)} (F), $$
and
$$ \ Q_{21} R_0= \left(\begin{array}{cc}
 \beta_1  &   \beta_1 z + \beta_2\\
\end{array} \right)  \in \mathbb{M}_{ (n-2) \times 2} (F); $$
here, for $i = 1, 2$,  $\alpha_i$ (resp. $\beta_i$) stands for the $i$th row (resp. column) of the matrix $ Q_{12}$ (resp. $Q_{21}$).
Now, since  $F$ has more than $4n -3= 4(n-2) + 5$ elements,  we get that for an appropriate  $0 \not= z \in F$,  the $(1,2)$-entry of  the matrix  $ (PR)^{-1} A (PR)= R^{-1} Q R$  along with those entries whose corresponding ones in $ Q = P^{-1} A P$  are nonzero  are all nonzero and also its diagonal entries are mutually distinct. This is in contradiction with our choice of $ Q = P^{-1} A P$. This completes the proof.

(ii) The proof, which is omitted for brevity, is almost identical to that of (i).
\end{proof}


\noindent {\bf Remark.}
In the theorem, if we require only that the given nonscalar matrix be similar to a fully nonzero matrix, then it suffices for the ground field to have at least  $2n + 1$  elements. The proof goes through almost verbatim.

\bigskip


The following is the counterpart of \cite[Theorem 2]{RadjaviRosenthal1970} for a countable number of non-realscalar bounded linear operators on quaternion Hilbert spaces.


\begin{thm} \label{2.2}

{\rm (i)} Let $\mathcal{H}$ be a separable right (resp. left) quaternion Hilbert space and  $A_k \in \mathcal{B}( \mathcal{H}) $ ($k \in \mathbb{N}$) be non-realscalar bounded linear operators. Then, there exists an orthonormal basis $ \mathcal{B}$ for $\mathcal{H}$ such that $ \langle A_ke , e' \rangle \not= 0$ for all $ e, e' \in  \mathcal{B}$ and $k \in \mathbb{N}$.

{\rm (ii)} Let $ n \in \mathbb{N}$, $A_k \in \mathbb{M}_n(\mathbb{H})$ ($k \in \mathbb{N}$) be non-realscalar matrices. Then, there exists a unitary matrix $P \in  \mathbb{M}_n(\mathbb{H})$  such that the entries of $ P^* A_k P$'s are all nonzero for all $k \in \mathbb{N}$.
\end{thm}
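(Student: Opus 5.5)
We follow the strategy of \cite[Theorem 2]{RadjaviRosenthal1970}: build the orthonormal basis one vector at a time by a Gram--Schmidt-type construction. At each step the set of bad choices is a countable union of nowhere dense (or proper real-analytic) subsets of a sphere, so a Baire category or measure argument supplies a good choice. Part (ii) is the finite-dimensional case of (i), run on $\mathbb{H}^n$ and stopped after $n$ steps; the columns of $P$ are the basis vectors. So we concentrate on (i), for a right quaternion Hilbert space; the left case is symmetric.

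\textbf{Step 1 (local lemma).} Let $A$ be non-realscalar. We show that for any nonzero $x$, the sets
\[
\{y : \langle Ay,x\rangle = 0\} \quad\text{and}\quad \{y : \langle Ax,y\rangle=0\}
\]
are closed real-linear subspaces. The first is in fact a right $\mathbb{H}$-submodule. The question is when such a set is all of $\mathcal{H}$. We have $\langle Ay,x\rangle=0$ for all $y$ if and only if $A^*x=0$. Moreover $\langle Ax,x\rangle \neq 0$ fails for all unit $x$ only if the quaternionic numerical range is $\{0\}$. Here one uses the fact that a quaternionic operator whose quadratic form vanishes identically is $0$, and more generally a form $\langle Ax,x\rangle$ that is constant on the unit sphere forces $A$ to be a real scalar. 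This is exactly where ``non-realscalar'' rather than ``nonscalar'' is needed: $A=qI$ with $q\notin\mathbb{R}$ is not central. The obstruction for a translate $A-\lambda$ must be handled as in the complex case, since we only need nonvanishing, not avoidance of a scalar.

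\textbf{Step 2 (inductive construction).} Fix a dense sequence $(d_m)$ in $\mathcal{H}$. Suppose orthonormal $e_1,\dots,e_m$ have been chosen so that all entries $\langle A_k e_i,e_j\rangle$ with $i,j\le m$ are nonzero, and so that $d_1,\dots,d_{\lfloor m/2\rfloor}$ lie in their closed span. Choose $e_{m+1}$ in the unit sphere $S$ of $\{e_1,\dots,e_m\}^\perp$, taken close to the normalized projection of the next $d$ when that projection is nonzero; this keeps the basis complete. We require, for every $k$ and every $i\le m$, that
\[
\langle A_k e_{m+1},e_i\rangle,\quad \langle A_k e_i,e_{m+1}\rangle,\quad \langle A_k e_{m+1},e_{m+1}\rangle
\]
are all nonzero. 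Each condition excludes the intersection of $S$ with a closed real subspace, or with the zero set of a real quadratic form. If that subspace or zero set is proper within the infinite-dimensional $\{e_1,\dots,e_m\}^\perp$, it is nowhere dense in $S$. Countably many such sets cannot cover any open subset of $S$, by Baire's theorem, since $S$ is a complete metric space. Hence a good $e_{m+1}$ exists arbitrarily near any prescribed point of $S$.

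\textbf{Step 3 (main obstacle).} The hard part is showing that each excluded set is proper in the orthocomplement. Properness fails, for instance, if $A_k^*e_i\perp\{e_1,\dots,e_m\}^\perp$, that is, if $A_k^* e_i$ lies in the finite span of $e_1,\dots,e_m$. Also $A_k$ compressed to the complement could be a real scalar. These degeneracies must be prevented in advance by strengthening the induction hypothesis. We will also require that $A_k^*e_i,\ A_ke_i\notin \operatorname{span}\{e_1,\dots,e_m\}$. Since $A_k$ is not a finite-rank perturbation of a real scalar on a cofinite subspace in a way that kills everything, a compression of a non-realscalar operator to a finite-codimensional subspace stays non-realscalar. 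If $A_k=\lambda+F$ with $F$ of finite rank, handle that case separately by a direct perturbation. Each added requirement is again the complement of a nowhere dense set, since it excludes a finite-dimensional affine condition in an infinite-dimensional space, so Baire still applies.

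With these conditions propagated, the induction runs. Completeness of $\{e_m\}$ follows from the density bookkeeping. Part (ii) is identical, except that the spheres are finite-dimensional: the proper real-algebraic exceptional sets are closed and nowhere dense, and the same Baire argument on the compact sphere applies. Non-realscalarity of the final compressions, down to dimension $1$, needs the count to work. In the last step one can instead invoke the finite-dimensional induction by choosing vectors generically among all unitaries at once, since the bad set in the unitary group is a countable union of proper real-analytic subvarieties.
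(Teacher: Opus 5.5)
Your framework (a Gram--Schmidt construction in which each new unit vector is chosen, via Baire's theorem, off countably many closed nowhere dense subsets of the unit sphere of $\mathcal{M}_m:=\{e_1,\dots,e_m\}^\perp$) is a reasonable way to flesh out the paper's proof. That proof consists only of the assertion that the arguments of \cite[Theorems 1 and 2]{RadjaviRosenthal1970} adapt to quaternion spaces. But Step 3, which you rightly call the crux, does not close as written.

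Let $P_m$ be the projection onto $\mathcal{M}_m$ and $C_{k,m}$ the compression of $A_k$ to $\mathcal{M}_m$. To propagate your hypothesis $A_ke_i,\,A_k^*e_i\notin\operatorname{span}\{e_1,\dots,e_m\}$ to the new index $i=m+1$, you need $e_{m+1}$ not to be a right eigenvector of $C_{k,m}$ or of $C_{k,m}^*$. That is not ``a finite-dimensional affine condition'': the eigenvalues can form a continuum and the eigenspaces can be infinite-dimensional. What is true is the following. The unit right eigenvectors of an operator $C$ on a space of dimension at least $2$ form a closed set, since eigenvalues are bounded by $\|C\|$. This set has empty interior unless $C$ is a real scalar. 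Indeed, suppose every vector near $x$ is an eigenvector, with $Cx=x\lambda$. For $y\notin x\mathbb{H}$, comparing $C(x+ty)=(x+ty)\mu_t$ at two small real $t\neq0$ gives $\mu_t=\lambda$ and $Cy=y\lambda$. By density $Cy=y\lambda$ for all $y$, and right linearity then forces $\lambda q=q\lambda$ for every $q\in\mathbb{H}$, i.e.\ $\lambda\in\mathbb{R}$. This is where non-realscalarity really enters. It also shows that ``$C_{k,m}$ is not a real scalar'' has to be part of the inductive hypothesis; this hypothesis also yields $C_{k,m}\neq0$, which you need for the diagonal entries.

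That hypothesis can fail exactly in the case you set aside, $A_k=r_kI+F_k$ with $r_k\in\mathbb{R}$ and $0\neq F_k$ of finite rank. There your general claim about compressions breaks down, and your listed conditions do not prevent the failure. For instance, take $r\neq0$ and $F$ of rank two with $P_1FP_1=0$, $Fe_1\perp e_1$ and $P_1Fe_1\neq0\neq P_1F^*e_1$. Then $e_1$ passes all of your tests for $A=rI+F$, yet $\langle Ae_p,e_q\rangle=0$ for all $p\neq q\geq2$. Since one basis must serve every $A_k$ simultaneously, a ``separate direct perturbation'' is not available.

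The repair stays inside your scheme. If $A_k$ is not of this form, non-realscalarity of $C_{k,m}$ is automatic: if $P_m(A_k-rI)P_m=0$, then $A_k-rI$ has finite rank. If $A_k=r_kI+F_k$, it is equivalent to $C:=P_mF_kP_m\neq0$. Cutting down by one more unit vector $e\in\mathcal{M}_m$ can kill $C$ only if $e\in\operatorname{ran}C\cup\operatorname{ran}C^*$, a finite-dimensional and hence nowhere dense exclusion.

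Two smaller points.
\begin{enumerate}
\item Generic choices never put $d_j$ exactly into a finite span. So do the completeness bookkeeping with tolerances $\varepsilon_m\to0$: targeting $d$ at step $m$ multiplies its distance to the span by at most $\varepsilon_m$.
\item In (ii) the sequential scheme genuinely breaks down at the last steps: the final vector is forced up to a unit, and every vector of a one-dimensional space is an eigenvector. So the global argument on the connected group $\mathrm{Sp}(n)$ is the one to use. You must first check that no entry map $U\mapsto(U^*A_kU)_{ij}$ vanishes identically. This follows from the same two facts: $\langle A_kx,x\rangle\not\equiv0$, and not every vector is a right eigenvector of $A_k$.
\end{enumerate}
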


\begin{proof} It suffices to prove (i), whose proof is carried out by adjusting those of \cite[Theorems 1 and 2]{RadjaviRosenthal1970} in the setting of quaternion Hilbert spaces.
\end{proof}



\begin{thm} \label{2.3}
Let  $ n \in \mathbb{N}$, $A \in \mathbb{M}_n(\mathbb{H})$ be such that its entries are all nonzero, and $ B \in  \mathbb{M}_n(\mathbb{H})$ a diagonal matrix whose diagonal entries are all nonzero and mutually nonsimilar. Then, up to a similarity, the real algebra generated by $A$ and $B$ is one of the following:  $ \mathbb{M}_n(\mathbb{R})$, $ \mathbb{M}_n(\mathbb{C})$, or  $\mathbb{M}_n(\mathbb{H})$.
\end{thm}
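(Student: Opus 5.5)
Let $\mathcal{A}$ denote the real algebra (with identity) generated by $A$ and $B$. The plan is to first show that $\mathcal{A}$ contains the diagonal matrix units $E_{11},\dots,E_{nn}$ up to scalars, then that it contains all the off-diagonal units twisted by quaternions, and finally to identify the real algebra spanned by these elements.

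\emph{Step 1: the idempotents.} The entries $b_1,\dots,b_n$ of $B$ are nonzero and pairwise nonsimilar. Each $b_i$ satisfies its real minimal polynomial $p_i$, which is linear if $b_i\in\mathbb{R}$ and quadratic otherwise. Two quaternions are similar exactly when they share this minimal polynomial, so the $p_i$ are pairwise distinct irreducibles over $\mathbb{R}$, hence pairwise coprime. Put $q_i=\prod_{j\ne i}p_j$. Then $q_i(B)=\mathrm{diag}(0,\dots,q_i(b_i),\dots,0)$ with $q_i(b_i)\neq 0$, since $p_j(b_i)\ne 0$ for $j\neq i$. Chinese remainder over $\mathbb{R}[t]$ gives a real polynomial $r_i$ with $r_i\equiv 1 \pmod{p_i}$ and $r_i\equiv 0\pmod{p_j}$ for $j\ne i$. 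Since real polynomials evaluated at a quaternion lie in the commutative field $\mathbb{R}(b_i)$, this yields $E_{ii}=r_i(B)\in\mathcal{A}$.

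\emph{Step 2: compressing $\mathcal{A}$.} For words $W$ in $A$ and $B$, the products $E_{ii}WE_{jj}$ lie in $\mathcal{A}$. Set $D_{ij}=\{d\in\mathbb{H}: dE_{ij}\in\mathcal{A}\}$, a real subspace of $\mathbb{H}$. We have $D_{ij}D_{jk}\subseteq D_{ik}$, and $D_{ij}$ contains $a_{ij}\neq 0$ from $E_{ii}AE_{jj}$. Consequently $\mathcal{A}=\bigoplus_{i,j}D_{ij}E_{ij}$, and each $D_{ii}$ is a real subalgebra of $\mathbb{H}$ containing $1$ and, by finite-dimensionality, closed under inverses. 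Hence $D_{ii}$ is a division algebra, isomorphic to $\mathbb{R}$, $\mathbb{C}$ or $\mathbb{H}$.

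\emph{Step 3: normalizing.} Let $S=\mathrm{diag}(s_1,\dots,s_n)$ with $s_1=1$ and $s_j=a_{1j}$ (right multiplication conventions to be fixed). Conjugating by $S$ replaces $D_{ij}$ by $s_i^{-1}D_{ij}s_j$ and makes $1\in D_{1j}$. Also $D_{j1}\ni a_{j1}\ne0$, so $D_{1j}D_{j1}\subseteq D_{11}$. This gives $D_{j1}\subseteq D_{11}$ and symmetrically $D_{11}\subseteq D_{1j}$, and a short chase shows $D_{ij}=D_{11}=:K$ for all $i,j$. Therefore $S^{-1}\mathcal{A}S=\mathbb{M}_n(K)$, where $K$ is a subalgebra of $\mathbb{H}$. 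A final inner automorphism of $\mathbb{H}$, applied entrywise as a similarity by a scalar unitary, moves $K$ to the standard copy of $\mathbb{R}$, $\mathbb{C}$ or $\mathbb{H}$.

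\emph{Main obstacle.} The hardest part is the bookkeeping in Step 3: the noncommutativity of $\mathbb{H}$ must be handled correctly when conjugating by $S$, and one must verify that the inclusions $D_{ij}D_{jk}\subseteq D_{ik}$ really force all the $D_{ij}$ to coincide with a single division algebra. The conclusion does not depend on the entries of $A$ being nonzero beyond the first row and column; those entries simply land in $K$.
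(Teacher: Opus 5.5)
Your argument is correct, but it takes a different route from the paper. The paper uses the same two ingredients as your Steps 1--2, namely $E_{ii}\in\mathbb{A}$ and $a_{ij}E_{ij}=E_{ii}AE_{jj}\in\mathbb{A}$. It uses them only to show that the real algebra has no nontrivial invariant subspace of $\mathbb{H}^n$. It then invokes the quaternionic Burnside-type theorem \cite[Theorem 1.2(iii)]{Yahaghi2017}, which identifies an irreducible real subalgebra of $\mathbb{M}_n(\mathbb{H})$, up to similarity, as $\mathbb{M}_n(\mathbb{R})$, $\mathbb{M}_n(\mathbb{C})$ or $\mathbb{M}_n(\mathbb{H})$.

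You bypass irreducibility and Burnside entirely. The Peirce decomposition $\mathcal{A}=\bigoplus_{i,j} D_{ij}E_{ij}$, Frobenius' theorem applied to $D_{11}$, and a diagonal similarity force every $D_{ij}$ to equal a single division subalgebra $K$. Your "short chase" does go through:
\begin{itemize}
\item first $D_{j1}\subseteq K\subseteq D_{1j}$;
\item then $D_{1j}c\subseteq K$ for any $0\neq c\in D_{j1}\subseteq K$, which gives $D_{1j}=K$;
\item then $K=cK\subseteq D_{j1}$;
\item finally $K=D_{i1}D_{1j}\subseteq D_{ij}\subseteq D_{1i}D_{ij}D_{j1}\subseteq K$.
\end{itemize}

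What each approach buys:
\begin{itemize}
\item Your proof is self-contained and gives an explicit similarity (diagonal, followed by a scalar unitary).
\item It tells you which of the three algebras occurs, namely the one isomorphic to $D_{11}$.
\item It shows that only the first row and column of $A$ need be nonzero.
\item The paper's proof is shorter modulo the citation.
\item It isolates irreducibility, which is the form used in the subsequent Remark and in Corollary~\ref{2.4}(ii).
\end{itemize}

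Two small details:
\begin{itemize}
\item With the convention that the $(i,j)$ entry of $S^{-1}XS$ is $s_i^{-1}x_{ij}s_j$, you should take $s_j=a_{1j}^{-1}$.
\item Your choice of a unital $\mathcal{A}$ costs nothing. Because the $b_i$ are nonzero, the real minimal polynomial $\prod_i p_i$ of $B$ has nonzero constant term, so $I$ is a polynomial in $B$ without constant term. This is exactly what the nonzero hypothesis on the diagonal of $B$ is for; your argument otherwise never uses it.
\end{itemize}
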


\begin{proof} In light of \cite[Theorem 1.2(iii)]{Yahaghi2017}, it suffices to show that the real algebra $\mathbb{A}$ generated by $A$ and $B$ is irreducible. To this end, if $n=1$, the assertion is easily verified. Suppose that  $n > 1$  and $\mathcal{M}$  is a nonzero invariant subspace of $\mathbb{A}$. We need to show that $ \mathcal{M} = \mathbb{H}^n$. Let $(e_k)_{k =1}^n $ be the standard ordered basis of $\mathbb{H}^n$ and $E_{ij}$ be the matrix with $1$ in its $(i,j)$-entry and $0$ elsewhere. Since $B$ is a diagonal matrix and its diagonal entries are all nonzero and mutually nonsimilar,  we get that $E_{ii} \in \mathbb{A}$ for all $ 1 \leq i \leq n$. From this, we see that $ a_{ij} E_{ij} = E_{ii} A E_{jj} \in \mathbb{A}$ for all $ 1 \leq i , j \leq n$.  Pick a nonzero $ x \in \mathcal{M}$ and let $x = e_1 x_1 + \cdots + e_n x_n$ for some $ x_i \in \mathbb{H}$ ($1 \leq i \leq n$). There exists a $ 1 \leq j \leq n$ such that $ x_j \not= 0$. But $ E_{jj} \in \mathbb{A}$, the subspace $\mathcal{M}$ is invariant under $\mathbb{A}$, and $ x \in \mathcal{M}$.  Thus, $ e_j x_j =  E_{jj} x \in  \mathcal{M}$, which yields $ e_j \in \mathcal{M}$. Once again,  $ a_{ij} E_{ij} = E_{ii} A E_{jj} \in \mathbb{A}$ for all $ 1 \leq i  \leq n$,  $\mathcal{M}$ is invariant under $\mathbb{A}$, and  $ e_j \in \mathcal{M}$. It thus follows that $ e_i a_{ij} =  a_{ij} E_{ij} (e_j) \in \mathcal{M}$, from which we obtain $ e_i \in \mathcal{M}$ for all $ 1 \leq i  \leq n$. This implies $ \mathcal{M} = \mathbb{H}^n$, as desired.
\end{proof}


\noindent {\bf Remark.} Adjusting the proof of the theorem, one can indeed prove the following. {\it Let  $ n \in \mathbb{N}$, $F$ a field, $ \Delta$ a finite-dimensional division $F$-algebra,  $A \in \mathbb{M}_n( \Delta)$ be such that its entries are all nonzero, and $ B \in  \mathbb{M}_n( \Delta)$
  a diagonal matrix whose diagonal entries are all nonzero and mutually nonsimilar. Then, the $F$-algebra generated by $A$ and $B$ is irreducible.}


The following is an immediate consequence of Theorems \ref{2.1} and \ref{2.3}.

\begin{cor} \label{2.4}
{\rm (i)} Let $ n \in \mathbb{N}$, $F$ be any field with at least $ 2n +1$ elements, and $A \in \mathbb{M}_n(F)$  a nonscalar matrix. Then, there exists a diagonalisable matrix $B \in \mathbb{M}_n(F)$ such that the algebra generated by $A$ and $B$ is $ \mathbb{M}_n(F)$.

{\rm (ii)} Let $ n \in \mathbb{N}$ and $A \in \mathbb{M}_n(\mathbb{H})$  be a non-realscalar matrix. Then, there exists a diagonalisable matrix $B \in \mathbb{M}_n( \mathbb{H})$ such that the real algebra generated by $A$ and $B$ is irreducible, and hence, up to a similarity, it is $ \mathbb{M}_n(\mathbb{R})$, $ \mathbb{M}_n(\mathbb{C})$, or  $\mathbb{M}_n(\mathbb{H})$.
\end{cor}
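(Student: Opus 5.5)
The plan is to reduce both parts to Theorem~\ref{2.3} and its Remark, with Theorem~\ref{2.1} (in its weakened form from the Remark after it) or Theorem~\ref{2.2}(ii) supplying a fully nonzero conjugate of $A$.

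\textbf{Part (i).} First dispose of the case $n=1$: the hypothesis ``nonscalar'' cannot hold, or one simply takes $B=I$, so assume $n\ge 2$. By the Remark following Theorem~\ref{2.1}, a field with at least $2n+1$ elements suffices to find $P\in\mathbb{M}_n(F)^{-1}$ with $A':=P^{-1}AP$ fully nonzero. Since $|F|\ge 2n+1>n+1$, we can choose $n$ mutually distinct nonzero scalars $d_1,\dots,d_n\in F$ and set $D=\mathrm{diag}(d_1,\dots,d_n)$. The algebra generated by $D$ contains every polynomial in $D$. By Lagrange interpolation it therefore contains each $E_{ii}$, for instance $E_{ii}=\prod_{j\neq i}(D-d_jI)/(d_i-d_j)$. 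As in the proof of Theorem~\ref{2.3}, it then contains $a'_{ij}E_{ij}=E_{ii}A'E_{jj}$, and since $a'_{ij}\neq 0$ it contains every $E_{ij}$. So the algebra generated by $A'$ and $D$ is all of $\mathbb{M}_n(F)$; this is more direct than invoking irreducibility plus Burnside. Finally put $B:=PDP^{-1}$, which is diagonalisable. The algebra generated by $A$ and $B$ is $P(\text{alg}(A',D))P^{-1}=\mathbb{M}_n(F)$.

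\textbf{Part (ii).} Theorem~\ref{2.2}(ii), applied to the single matrix $A$ (taking $A_k=A$ for all $k$), gives a unitary $P$ with $P^*AP$ fully nonzero. Then choose a diagonal $D\in\mathbb{M}_n(\mathbb{H})$ whose diagonal entries are nonzero and mutually nonsimilar. Real scalars $1,2,\dots,n$ work, since distinct real numbers are nonsimilar in $\mathbb{H}$: similarity preserves real part and modulus. Theorem~\ref{2.3} now says the real algebra generated by $P^*AP$ and $D$ is irreducible, and up to similarity equals $\mathbb{M}_n(\mathbb{R})$, $\mathbb{M}_n(\mathbb{C})$ or $\mathbb{M}_n(\mathbb{H})$. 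Conjugating back by $P$, with $B=PDP^*$, preserves both irreducibility and these conclusions.

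\textbf{Main obstacle.} The only delicate point is making sure the field-size hypothesis $|F|\ge 2n+1$ actually matches what is available. Theorem~\ref{2.1} itself needs $4n-2$ elements and $n>2$. The case $n=2$ is covered by part (ii) of Theorem~\ref{2.1}, which requires $6\ge 2\cdot2+1$ elements, slightly more than $5$. So I will rely on the Remark after Theorem~\ref{2.1}, which claims $2n+1$ suffices for full nonzeroness in all $n\ge2$. If needed, I would check the $n=2$ case directly: conjugate by $\begin{pmatrix}1&z\\0&1\end{pmatrix}$ and avoid the finitely many bad $z$ (at most about four), which is possible when $|F|\ge5$. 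The distinctness of the diagonal of $A'$ is not needed here, since the distinct eigenvalues are placed in $B$ instead.
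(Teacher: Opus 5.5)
Your proposal is correct and is essentially the paper's own (omitted) argument: get a fully nonzero conjugate of $A$ from the Remark after Theorem~\ref{2.1} (resp.\ Theorem~\ref{2.2}(ii) in the quaternionic case), run the matrix-unit argument from the proof of Theorem~\ref{2.3} against a diagonal matrix with distinct nonzero (resp.\ mutually nonsimilar nonzero) entries, and conjugate back. Requiring the $d_i$ to be nonzero is what makes each $E_{ii}$ a polynomial in $D$ with no constant term, so the argument also works if ``algebra generated'' is read non-unitally.
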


\begin{proof} The proof, which is omitted for brevity, follows quickly from the remark after Theorem \ref{2.1} and from Theorem  \ref{2.3} together with its proof.
\end{proof}

\bigskip


\section{Generators satisfying polynomials of minimum degrees} \label{sec04}


\bigskip

We now turn our attention to pairs of  ``\emph{small generators}" of $\mathcal{B}(\mathcal{H})$ for a finite-dimensional or an infinite-dimensional, separable complex Hilbert space $\mathcal{H}$.

As noted in the introduction, if $\mathcal{H}$ is of finite dimension greater than two, then $\mathcal{B}(\mathcal{H})$
does not admit a generating pair $\{A,B\}$ of quadratic operators. That being said, there exist unitary operators $U$ and $V$ satisfying $U^2 = V^3 = I$ that together generate $\mathcal{B}(\mathcal{H})$ as an algebra.   In fact, we shall  give a necessary and sufficient condition for a quadratic unitary and a cubic unitary to generate the whole algebra.

Let $n \in \mathbb{N}$, and suppose that $\mathcal{X}$, $\mathcal{Y}$ are subspaces of $\mathbb{M}_n(\mathbb{C})$.  We define
\[\textsc{MINT}_{U}(\mathcal{X},\mathcal{Y}) = \max \{ \dim \, (\mathcal{X} + U^{*} \mathcal{Y} U) : U \in \mathbb{M}_n(\mathbb{C}) \mbox{ unitary}\}.\]

The notation ``MINT" refers to ``minimal intersection", and this definition first arose in a rather different context in \cite{MarcouxRadjaviZhang2022}, in connection with the problem of determining
$\max\{\dim(P^\perp(\mathbb{A}\oplus \mathbb{B})P): P=P^*=P^2\}$, where $\mathbb{A}\subset \mathbb{M}_k(\mathbb{C})$ and $\mathbb{B}\subset \mathbb{M}_l(\mathbb{C})$ are semi-simple
algebras.

Given $T\in \mathcal{B}(\mathcal{H})$, we denote the unitary orbit of $T$ by
\[\mathcal{U}(T):=\{W^*TW: W\in \mathcal{B}(\mathcal{H})\mbox{ unitary}\}.\]

\begin{thm} \label{thm4.01}
Let $n\ge 3$, $\mathcal{H}=\mathbb{C}^{n}$, and suppose that $U$ and $V$ are unitary operators such that the minimal polynomial of $U$ (resp.~$V$) is a quadratic (resp.~a cubic) polynomial. Then there exist $U_{0}\in\mathcal{U}(U)$ and $V_{0}\in\mathcal{U}(V)$ such that the algebra $\mathfrak{M}$ generated by $U_{0}$ and $V_{0}$ is all of $\mathcal{B}(\mathcal{H})$ if and only if
\[
q_{1}+p_{1}\le n, \]
where $q_{1}$ is the larger of the multiplicities of the eigenvalues of $U$, and $p_{1}$ is the largest of the  multiplicities of the eigenvalues of $V$.
\end{thm}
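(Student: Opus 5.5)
The plan splits into an easy necessity argument and a harder sufficiency argument. Write $\lambda_1,\lambda_2$ for the eigenvalues of $U$, with multiplicities $q_1\ge q_2$ and $q_1+q_2=n$. Write $\mu_1,\mu_2,\mu_3$ for the eigenvalues of $V$, with multiplicities $p_1\ge p_2\ge p_3$ and $p_1+p_2+p_3=n$.

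\emph{Necessity.} Take any $U_0\in\mathcal{U}(U)$ and $V_0\in\mathcal{U}(V)$. Let $\mathcal{E}=\ker(U_0-\lambda_1)$ and $\mathcal{F}=\ker(V_0-\mu_1)$, which have dimensions $q_1$ and $p_1$. Then $\dim(\mathcal{E}\cap\mathcal{F})\ge q_1+p_1-n$. So if $q_1+p_1>n$, there is a common eigenvector of $U_0$ and $V_0$. Its span is a one-dimensional subspace invariant under $\mathfrak{M}$, and since $n\ge 3$ this gives $\mathfrak{M}\ne\mathcal{B}(\mathcal{H})$.

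\emph{Sufficiency: reduction to irreducibility.} Since $U_0^*$ and $V_0^*$ are polynomials in $U_0$ and $V_0$, the algebra $\mathfrak{M}$ is a unital $*$-algebra. By Burnside's theorem (or the double commutant theorem), $\mathfrak{M}=\mathcal{B}(\mathcal{H})$ if and only if $U_0$ and $V_0$ have no common nontrivial reducing subspace. Fix $U_0=U$ and consider $V_0=W^*VW$ with $W$ ranging over the unitary group $\mathcal{U}_n$. The condition ``the words of length $\le n^2$ in $U$ and $W^*VW$ span $\mathbb{M}_n(\mathbb{C})$'' is the non-vanishing of some $n^2\times n^2$ minor. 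That minor is a real-analytic function of $W$. Since $\mathcal{U}_n$ is connected, the good set of $W$ is either empty or open and dense. So it suffices either to produce a single good $W$, or to show that the bad set has empty interior.

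\emph{Sufficiency: dimension count.} I would bound the dimension of the bad set. A reducible pair has a reducing subspace $\mathcal{M}$ of dimension $m$, $1\le m\le n-1$, on which $U$ and $V_0$ act with eigenvalue multiplicities $(a_1,a_2)$ and $(b_1,b_2,b_3)$, where $a_i\le q_i$, $b_j\le p_j$ and $\sum a_i=\sum b_j=m$. The pairs (projection, ordered triple of orthogonal projections) of the given ranks form the flag manifold
\[X=\mathcal{U}_n/(\mathcal{U}_{q_1}\times\mathcal{U}_{q_2})\times\mathcal{U}_n/(\mathcal{U}_{p_1}\times\mathcal{U}_{p_2}\times\mathcal{U}_{p_3}).\]
The reducible ones with data $(m,a,b)$ are parametrised by a Grassmannian point $\mathcal{M}$ together with flag data inside $\mathcal{M}$ and inside $\mathcal{M}^\perp$. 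The task is to compare the real dimensions and show that each such stratum has dimension strictly less than $\dim X$ (more precisely, less than the dimension of each $U$-fibre of $X$) exactly when $q_1+p_1\le n$. The resulting inequality is a quadratic expression in the $a_i,b_j$. I expect it to reduce to statements of the form $\sum_{i,j}(\ldots)\ge 1$ that fail only in the configuration $a_1+b_1>m$-type extremes forced by $q_1+p_1>n$.

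\emph{Main obstacle and fallback.} The main obstacle is the dimension count, since many $(m,a,b)$ strata must be handled at once. If this becomes unwieldy, I would instead construct a good pair directly, by induction on $n$. First I would find irreducible pairs in the base cases $n=3,4,5$ with all admissible multiplicity patterns, by explicit $2\times2$ and $3\times3$ rotation blocks. Then I would glue an irreducible block for $(q_1-1,q_2,\dots)$ with a small block via a rank-one perturbation, in the spirit of Theorem~\ref{2.3}. There, a diagonal generator with distinct entries together with a fully non-zero partner (as supplied by Theorem~\ref{2.1}) forces irreducibility; here one would use the spectral projections of $V_0$ in place of the distinct-eigenvalue diagonal matrix.
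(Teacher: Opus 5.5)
Your necessity argument is the same as the paper's and is fine. The reduction of sufficiency to irreducibility is also correct, as is the observation that the good set of $W$ is open and dense once it is nonempty. The gap is that nonemptiness is the entire content of sufficiency, and neither of your routes proves it.

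\emph{The dimension count.} Fix $U$ and a stratum $(m,a,b)$, and put $c_i=q_i-a_i$ and $d_j=p_j-b_j$. The incidence set consists of pairs $(\mathcal{M},V_0)$ with $\mathcal{M}\in\mathrm{Gr}(a_1,\ker(U-\lambda_1))\times\mathrm{Gr}(a_2,\ker(U-\lambda_2))$. Its real dimension is $2(a_1c_1+a_2c_2)+m^2-\sum_j b_j^2+(n-m)^2-\sum_j d_j^2$. Since $\dim_{\mathbb{R}}\mathcal{U}(V)=n^2-\sum_j p_j^2$, its codimension in the orbit is
\[ 2\Bigl(a_1c_2+a_2c_1-\sum_{j=1}^{3} b_jd_j\Bigr). \]
What you actually have to prove is therefore $a_1c_2+a_2c_1>\sum_j b_jd_j$ for every $1\le m\le n-1$ and every admissible $(a,b)$. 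This is the only place where $q_1+p_1\le n$ can enter, and the inequality is sharp. For $m=1$, $a=(1,0)$, $b=(1,0,0)$ it reads $q_2>p_1-1$, which becomes an equality as soon as $p_1=q_2+1$. So no crude estimate will do, and a genuine argument over all strata, using the box constraints $0\le a_i\le q_i$ and $0\le b_j\le p_j$, is needed. You only say you ``expect'' the inequality to reduce to something that fails only in extreme configurations. Until it is proved, the sufficiency direction is not established.

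\emph{The fallback.} It does not close the gap either. The argument of Theorem~\ref{2.3} needs a diagonal generator with mutually distinct (nonsimilar) eigenvalues, so that every rank-one diagonal unit $E_{ii}$ lies in the algebra. Here $U_0$ and $V_0$ have only two and three spectral projections, each of large rank, so no such units are available and that argument gives nothing. In addition, any ``rank-one perturbation'' used to glue blocks must keep $U_0$ and $V_0$ inside their unitary orbits. The induction must also preserve $q_1+p_1\le n$ on the smaller block. Neither point is addressed.

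\emph{What the paper does.} The paper does not prove this step from scratch. It notes that $C^*(U_0)'\cong M_{q_1}\oplus M_{q_2}$ and $C^*(V_0)'\cong M_{p_1}\oplus M_{p_2}\oplus M_{p_3}$. It then invokes \cite[Theorem 4.9]{MarcouxRadjaviZhang2022} to get a unitary conjugation after which these two commutants meet only in $\mathbb{C}I$. The double commutant theorem then gives $\mathrm{Alg}(U_0,V_0)=C^*(U_0,V_0)=\mathcal{B}(\mathcal{H})$. A completed version of your dimension count would be a self-contained substitute for that citation, but as written the decisive inequality is missing.
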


\begin{proof}
Set $U \simeq \alpha_1 I_{q_1} \oplus \alpha_2 I_{q_2}$, where $q_1 \ge q_2$ and $\alpha_1 \ne \alpha_2$, and set $V \simeq \beta_1 I_{p_1} \oplus \beta_2 I_{p_2} \oplus \beta_3 I_{p_3}$, where $p_1 \ge p_2 \ge p_3$, and $\beta_1, \beta_2$ and $\beta_3$ are distinct.

\medskip

\noindent \textsc{The proof of necessity.}\\
Suppose that the condition on $p_{1}+q_{1}$ fails, i.e.\ $p_{1}+q_{1}>n$. Then, from the dimension formula it is easy to see that for any $U_{1}\in\mathcal{U}(U)$ and $V_{1}\in\mathcal{U}(V)$, $U_{1},V_{1}$ have a common eigenvector. Consequently
$$
\operatorname{Alg}(U_{1},V_{1})\neq\mathcal{B}(\mathcal{H}).
$$

\medskip
\noindent \textsc{The proof of sufficiency.}\\
Suppose now that $p_{1}+q_{1}\le n$. Since $q_{1}+q_{2}=n$ and $q_{1}\ge q_{2}$, it follows that $p_{1}\le n-q_{1}=q_{2}\le q_{1}$. By the functional calculus,
$$
U_{1}^{*}\in\operatorname{Alg}(U_{1}),\qquad V_{1}^{*}\in\operatorname{Alg}(V_{1})
$$
for any $U_{1}\in\mathcal{U}(U)$ and $V_{1}\in\mathcal{U}(V)$. Hence, for all such $U_1,V_1$,
$$
\operatorname{Alg}(U_{1},V_{1})=C^{*}(U_{1},V_{1}).
$$

%

\noindent Since $U\simeq  \alpha_{1}I_{q_{1}}\oplus \alpha_{2}I_{q_{2}}$, we deduce that for any $U_1\in \mathcal{U}(U)$,
$$
C^{*}(U_{1})' \cong M_{q_{1}}\oplus M_{q_{2}} =: \mathbb{B}.
$$

\noindent Similarly, since $V\simeq  \beta_{1}I_{p_{1}}\oplus \beta_{2}I_{p_{2}}\oplus \beta_{3}I_{p_{3}}$ we find that for any $V_1\in \mathcal{U}(V)$,
$$
C^{*}(V_{1})' \cong M_{p_{1}}\oplus M_{p_{2}}\oplus M_{p_{3}} =: \mathbb{A}.
$$

\noindent By \cite[Theorem 4.9]{MarcouxRadjaviZhang2022}, $\textsc{MINT}_{U}(\mathbb{A},\mathbb{B})=1$.   That is,
\[
\min\bigl\{\dim(\mathbb{A}\cap W^{*}\mathbb{B}W):\,W\in \mathcal{B}(\mathcal{H})=\mathbb{M}_n(\mathbb{C})\mbox{ unitary} \bigr\}=1. \]
Therefore there exist $U_{0}\in\mathcal{U}(U)$ and $V_{0}\in\mathcal{U}(V)$ such that
$$
C^{*}(U_0,V_0)'=C^{*}(U_{0})'\cap C^{*}(V_{0})' = \mathbb{C}I.
$$
Consequently,
$$
\operatorname{Alg}(U_{0},V_{0})=C^{*}(U_0,V_0) = \mathcal{B}(\mathcal{H}).
$$
\end{proof}


We next consider the case of separable, infinite-dimensional, complex Hilbert space $\mathcal{H}$.  We denote by $\textsc{wot}$ the weak-operator topology on $\mathcal{B}(\mathcal{H})$.


\begin{thm} \label{thmJune9}
Let $\mathcal{H}$ be an infinite-dimensional and separable Hilbert space, and suppose that $U$ and $V$ are unitary operators such that the minimal polynomial of $U$ (resp.~$V$) is a quadratic (resp.~a cubic) polynomial.
Then there exist $U_0 \in \mathcal{U}(U)$ and $V_0 \in \mathcal{U}(V)$ such that the  \textsc{wot}-closed algebra $\mathfrak{M}$ generated by $U_0$ and $V_0$  is all of $\mathcal{B}(\mathcal{H})$ if and only if
\begin{enumerate}
	\item[(a)]
	both of the eigenspaces for $U$ are infinite-dimensional, and
	\item[(b)]
	at least two of the eigenspaces of $V$ are infinite-dimensional.
\end{enumerate}
\end{thm}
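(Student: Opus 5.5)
Both directions rest on one reduction. Since $U_0^*$ and $V_0^*$ are polynomials in $U_0$ and $V_0$, the \textsc{wot}-closed algebra $\mathfrak{M}$ generated by $U_0,V_0$ is the von Neumann algebra $W^*(U_0,V_0)$. By the double commutant theorem, $\mathfrak{M}=\mathcal{B}(\mathcal{H})$ if and only if $\{U_0,V_0\}'=\mathbb{C}I$, i.e.\ if and only if there is no nontrivial projection commuting with both, i.e.\ no nontrivial common reducing subspace. So we must decide exactly when two unitarily-placed spectral decompositions $\mathcal{H}=\mathcal{K}_1\oplus\mathcal{K}_2$ (for $U$) and $\mathcal{H}=\mathcal{L}_1\oplus\mathcal{L}_2\oplus\mathcal{L}_3$ (for $V$) can be positioned with trivial common commutant.

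\textbf{Necessity.} Suppose (a) fails, say $\dim\mathcal{K}_2=m<\infty$. Then $\mathcal{K}_1$ has finite codimension. For any position of $V$, some eigenspace $\mathcal{L}_j$ of $V$ is infinite-dimensional, so $\mathcal{K}_1\cap\mathcal{L}_j\neq\{0\}$. Any vector there is a common eigenvector, and its span is a one-dimensional common reducing subspace, since both operators are normal.

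Suppose instead (b) fails, so $\mathcal{L}_2$ and $\mathcal{L}_3$ are finite-dimensional. Then $\mathcal{L}_1$ has finite codimension $k$ and $\mathcal{K}_1\cap\mathcal{L}_1$ is nonzero, since $\mathcal{K}_1$ is infinite-dimensional once (a) holds; if (a) fails we are already done. In either case a common eigenvector exists, so $\mathfrak{M}\neq\mathcal{B}(\mathcal{H})$.

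\textbf{Sufficiency.} The plan is to build the position explicitly, or to reduce to the fully non-zero philosophy of Theorem~\ref{2.2}. I would first try a direct construction.

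1. Write $\mathcal{H}=\mathcal{K}\oplus\mathcal{K}$ with $\mathcal{K}$ separable and infinite-dimensional, and take $U_0=\alpha_1 I\oplus\alpha_2 I$.
2. Let $V_0$ have spectral projections $E_1,E_2,E_3$. A projection commuting with $U_0$ has the form $P\oplus Q$.
3. Choose $V_0$ so that, in the $2\times 2$ operator-matrix picture, some element of $W^*(U_0,V_0)$ has corners $E_{11}$-type pieces that are irreducible. For instance, arrange for the compression $PE_1P$, with $P=I\oplus0$, to generate a von Neumann algebra on $\mathcal{K}$ with trivial commutant, and arrange for the off-diagonal corner of $E_1$ to be injective with dense range.
4. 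Then $P\oplus Q$ commuting with $E_1$ and $E_2$ forces $P=\lambda I$ on $\mathcal{K}$ (by irreducibility of the corner pieces) and $Q=\lambda I$ (via the injective, dense-range off-diagonal corner). Hence the commutant is trivial.

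To obtain irreducible corners, I would use the fact that a generic pair of positive contractions (for example, the two corners $P E_1 P$ and $P E_2 P$) can be chosen to generate an irreducible algebra on $\mathcal{K}$. Two projections in general position, with a third eigenspace available, provide three projections, and three projections can generate $\mathcal{B}(\mathcal{K})$ (Davis). The point of (b) is that it lets two of the spectral projections of $V_0$ be infinite-rank and "large" on both summands.

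\textbf{Main obstacle.} The delicate case is when $\mathcal{L}_3$ is finite-dimensional, possibly zero-dimensional. Then $V_0$ is essentially a pair of complementary infinite projections plus a finite-rank perturbation. Two projections alone always have many common reducing subspaces (the Halmos two-subspace theory), so the finite-rank eigenspace $\mathcal{L}_3$, together with $U_0$, must break all of them.

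For this case I would use the Halmos model: put $\mathcal{K}_1$ and $\mathcal{L}_1$ in generic position, with the angle operator $\Theta$ having simple spectrum, say multiplication by $x$ on $L^2[0,\pi/2]$. Then $W^*(U_0,E_1)'$ is a copy of $L^\infty$, and we choose $\mathcal{L}_3$ to contain a cyclic-type vector, for example one with all coefficients nonzero in the spirit of Theorem~\ref{2.2}. This kills the remaining abelian commutant, since a multiplication operator commuting with the rank-one-ish projection onto that vector must be scalar.

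If $\mathcal{L}_3=0$, then $V$ is quadratic, so hypothesis (b) with only two eigenspaces yields exactly this situation. That case must still be handled, because two projections never generate $\mathcal{B}(\mathcal{H})$ when $\dim\mathcal{H}>2$. This suggests the theorem implicitly has three genuine eigenvalues, $\dim\mathcal{L}_3\ge1$, which is what makes the argument work.
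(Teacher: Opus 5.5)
Your reduction to the commutant is correct, and your necessity argument is the paper's common-eigenvector argument. The gap is in sufficiency: no configuration is actually produced.

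\textbf{Step 3 and Davis.} Step~3 asks for something impossible. The single positive operator $PE_1P$ generates an abelian von Neumann algebra, which lies in its own commutant, so that commutant cannot be trivial on an infinite-dimensional $\mathcal{K}$. The repaired version (corners of $E_1$ and $E_2$ jointly irreducible, off-diagonal corner injective) is only asserted to be ``generic''. Davis's theorem does not supply it: it gives three generating projections in arbitrary relative position. You need the eigenprojection of $U_0$ together with two \emph{mutually orthogonal} projections $E_1,E_2$ of prescribed ranks, and producing such a triple is the content of the theorem itself.

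\textbf{The Halmos-model argument.} This paragraph is closer, but it only works when $\dim\mathcal{L}_3=1$. The commutant of $\{U_0,E_1\}$ acts on $\operatorname{ran}(I-E_1)\cong L^2[0,\pi/2]$ as the multiplication algebra. Suppose $\operatorname{ran}E_3$ corresponds to $\operatorname{span}\{h\chi_A,\,h\chi_{[0,\pi/2]\setminus A}\}$, with $h$ a.e.\ non-zero and $A$ and its complement of positive measure. Then $\operatorname{ran}E_3$ contains the full-support vector $h$, yet $E_3$ commutes with the non-scalar multiplication by $\chi_A$. So once $\operatorname{rank}E_3\ge 2$, commuting with $E_3$ does not make your chosen vector an eigenvector, and ``rank-one-ish'' glosses over exactly this. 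The case where all three eigenspaces are infinite-dimensional is left entirely to the unproved Steps~1--4. (Also, $\mathcal{L}_3=\{0\}$ never occurs: a cubic minimal polynomial forces three non-zero eigenspaces.)

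\textbf{The missing idea.} You need a way to kill the abelian commutant for every value of $\dim\mathcal{L}_3$, and this is where fully non-zero matrices enter.
\begin{enumerate}
\item[(1)] The paper uses (b) to choose a spectral projection $P$ of $V$ for two of its eigenvalues, with $P\mathcal{H}$ and $P^\perp\mathcal{H}$ both infinite-dimensional. Then $R=V|_{P\mathcal{H}}$ is non-scalar, so by \cite[Theorem 2]{RadjaviRosenthal1970} it has a fully non-zero matrix in some orthonormal basis of $P\mathcal{H}$.
\item[(2)] It places $U$ as the symmetry $\begin{bmatrix} M & N\\ N & -M\end{bmatrix}$, with $M$ diagonal in that basis with distinct entries in $[1/3,2/3]$ and $N=(I-M^2)^{1/2}$. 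This symmetry has two infinite-dimensional eigenspaces, so by (a) it lies in $\mathcal{U}(U)$.
\item[(3)] Since $P$ and $PUP$ lie in $\mathfrak{M}$, so do the diagonal matrix units of $P\mathcal{H}$. Compressing $R$ by them gives every matrix unit, so $\mathcal{B}(P\mathcal{H})\subseteq\mathfrak{M}$.
\item[(4)] The invertible corner $N$ carries this to the remaining blocks.
\end{enumerate}
In your language: take the angle operator diagonal with distinct eigenvalues rather than multiplication by $x$. Then choose the part of $V_0$ on $\operatorname{ran}(I-E_1)$ to have a fully non-zero matrix in the corresponding basis of $\operatorname{ran}(I-E_1)$. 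The commutant of $\{U_0,E_1\}$ then acts there by diagonal operators, and a diagonal operator commuting with a fully non-zero matrix is scalar.
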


\begin{proof}
It is routine to see that $U$  is diagonalisable with two  eigenvalues, say $\alpha_1$ and $\alpha_2$.   As such, the algebra generated by $U$ contains a diagonal operator $D_U$ with eigenvalues $1$ and $-1$ with multiplicities equal to the multiplicities of $\alpha_1$ and $\alpha_2$ respectively.   Similarly, the algebra generated by $V$ contains a diagonalisable operator $D_V$ with eigenvalues $1$, $\omega := e^{\frac{2 \pi i}{3}}$ and $\omega^2$ with multiplicities equal to the multiplicities of the eigenvalues $\beta_1, \beta_2$, and $\beta_3$ of $V$.    As such, if we can find unitary operators $W_1, W_2$ such that  the \textsc{wot}-closed algebra generated by $X_0 = W_1^* D_U W_1$ and $Y_0 = W_2^* D_V W_2$ is all of $\mathcal{B}(\mathcal{H})$, then clearly the \textsc{wot}-closed algebra generated by $W_1^* U W_1$ and $W_2^* V W_2$ is all of $\mathcal{B}(\mathcal{H})$, and we are done.  Conversely, we observe that $U$ lies in the algebra generated by $D_U$, and that $V$ lies in the algebra generated by $D_V$.  In other words, we have reduced the problem to the case where $U^2 = I$ and $V^3 = I$.

\medskip

\noindent{\textsc{the proof of necessity}}

Suppose that condition (a) fails, and that (without loss of generality), the eigenspace $\mathcal{H}(U, \{ -1\})$ for $U$ corresponding to the eigenvalue $-1$ is finite-dimensional.   If $U_0 \in \mathcal{U}(U)$ and $V_0 \in \mathcal{U}(V)$, then -- since at least one of the eigenspaces for $V_0$ is infinite-dimensional, call it $\mathcal{H}(V_0, \{ \beta \})$ (where $\beta \in \{ 1, \omega, \omega^2\}$), it follows that $\mathcal{H}(U_0, \{ 1\}) \cap \mathcal{H}(V_0, \{ \beta\}) \ne \{ 0\}$, and so if $0 \ne e \in  \mathcal{H}(U_0, \{ 1\}) \cap \mathcal{H}(V_0, \{ \beta\})$, then $\mathbb{C} e$ is reducing for both $U_0$ and $V_0$.     In particular, the \textsc{wot}-closure of the algebra generated by $U_0$ and $V_0$ is not all of $\mathcal{B}(\mathcal{H})$.

\medskip

Next, suppose that condition (b) fails,  say -- without loss of generality -- that the eigenvalues $\omega$ and $\omega^2$ appear with finite multiplicity.
Again, if $U_0 \in \mathcal{U}(U)$, then at least one of its eigenspaces is infinite-dimensional, say $\mathcal{H}(U_0, \{ \alpha\})$, where $\alpha \in \{-1, 1\}$, and so $\mathcal{H}(U_0, \{ \alpha \}) \cap \mathcal{H}(V_0, \{ 1 \}) \ne \{ 0\}$.   If $0 \ne f \in \mathcal{H}(U_0, \{ \alpha \}) \cap \mathcal{H}(V_0, \{ 1 \})$, then $\mathbb{C} f$ is reducing for both $U_0$ and $V_0$, and again, the \textsc{wot}-closure of the algebra generated by $U_0$ and $V_0$ is not all of $\mathcal{B}(\mathcal{H})$.

\medskip

\noindent{\textsc{the proof of sufficiency}}

To show sufficiency, write $V$ as a direct sum
\[
V = \begin{bmatrix} R & 0 \\ 0 & T \end{bmatrix}, \]
where the underlying spaces of $R$ and $T$ are both infinite-dimensional, $R$ has two eigenvalues and $T$ has just one.  Since $R$ is non-scalar, we can assume with no loss of generality that it has a fully nonzero matrix \cite[Theorem 2]{RadjaviRosenthal1970}. Again without loss of generality, assume that in this decomposition of the space, $U$ is the symmetry with the corresponding matrix
\[
U = \begin{bmatrix} M & N \\ N & - M \end{bmatrix}, \]
where $M$ is diagonal positive matrix with distinct diagonal entries all between $\frac{1}{3}$ and $\frac{2}{3}$.  Define $N > 0$ so that
\[
M^2   +   N^2 =  I. \]

By the functional calculus, the projection $P = \begin{bmatrix} I & 0 \\ 0 & 0 \end{bmatrix}$ lies in the algebra generated by $V$ (as does $P^\perp$), and thus the \textsc{wot}-closed algebra $\mathfrak{M}$ generated by $U$ and $V$ contains the \textsc{wot}-closed algebra $\mathfrak{M}_{11}$ generated by $P U P$ and $P V P$.   But $M$ is diagonal with distinct eigenvalues and $R$ is everywhere non-zero, whence $\mathfrak{M}_{11} = \cB(P \mathcal{H})$.

Note that $N$ is invertible.   Let $X_1 \in \cB(P^\perp \mathcal{H}, P \mathcal{H})$, $X_2 \in \cB(P \mathcal{H}, P^\perp \mathcal{H})$, and set $Y = X_1 N^{-1}$, $Z = N^{-1} X_2 \in \cB(P \mathcal{H})$.   Then
\[
\begin{bmatrix} 0 & X_1 \\ 0 & 0 \end{bmatrix} = \begin{bmatrix} Y & 0 \\ 0 & 0 \end{bmatrix} \begin{bmatrix} 0 & N \\ 0 & 0 \end{bmatrix} \in \mathfrak{M}, \]
and
\[
\begin{bmatrix} 0 & 0 \\ X_2 & 0 \end{bmatrix} = \begin{bmatrix} 0 & N \\ 0 & 0 \end{bmatrix}  \begin{bmatrix} Z & 0 \\ 0 & 0 \end{bmatrix} \in \mathfrak{M}. \]

It is then routine to check that $\mathfrak{M} = \mathcal{B}(\mathcal{H})$.
\end{proof}

\bigskip

\section{Stability of general sets of generators} \label{sec05}

\bigskip

In this section we give a general form of the following statement:   if $\mathcal{S} = \{ a_1, a_2, \ldots, a_k \}$ is a set of generators for an arbitrary algebra, and $\mathcal{T} = \{ b_1, b_2, \ldots, b_k\}$ is another set which is ``\emph{sufficiently close}" to $\mathcal{S}$ (in a sense to be made precise below), then $\mathcal{T}$ also generates the algebra.

In the following, the algebra $\mathbb{A}$ is arbitrary, that is, it is not even assumed to be associative. By an {\it algebra}, we mean a vector space  $\mathbb{A}$ over a field $F$ together with a multiplication coming from an $F$-bilinear function on  $\mathbb{A}$. With $ \mathbb{F} \in \{ \mathbb{R}, \mathbb{C}\}$, by a {\it topological  $ \mathbb{F}$-algebra}, we mean an $ \mathbb{F}$-algebra $\mathbb{A}$ endowed with a Hausdorff topology with respect to which the algebraic operations of $\mathbb{A}$, namely, the addition, the product of vectors by scalars, and the multiplication of $\mathbb{A}$, are all continuous. At this point, it is worth noting that, in light of  \cite[Theorem 1.21]{Rudin1991} and \cite[Theorem 1.1(i)]{Yahaghi2024}, any finite-dimensional $ \mathbb{F} $-algebra together with any vector space topology is a topological  $ \mathbb{F}$-algebra;  also see \cite[Theorem 4]{Albert1947}  and \cite[Proposition 1.1.7]{CabreraRodriguezPalacios2014}.

 The following theorem shows that  linear independence of a finite set of vectors in an arbitrary topological vector space is topologically stable in the following sense; see \cite{Fischer}.


\begin{thm} \label{5.1}
{\rm (i)} Let   $ \mathbb{F} \in \{ \mathbb{R}, \mathbb{C},  \mathbb{H}\}$, $ k \in \mathbb{N}$, and $X$ be a left (resp. right) topological vector space over $ \mathbb{F}$.  Then, the set
$$\mathcal{U}_k := \Big\{ (x_1 , \ldots , x_k ) \in X^k : \ \{ x_1 , \ldots , x_k\} \ {\rm is \ linearly \ independent} \Big\} $$
is open with respect to the product topology of $  X^k$. In particular, if  $(X, \|.\|)$ is a left (resp. right) normed linear space over $ \mathbb{F}$,  then, given linearly independent vectors $x_i \in X$ $( 1 \leq i \leq k)$, there exists an $ \varepsilon > 0$ such that $ \{ y_1 , \ldots, y_k \} \subseteq X$ is linearly independent  whenever
$$\max_{ 1 \leq i \leq k}  \| y_i - x_i\|  < \varepsilon.$$

{\rm (ii)} Under the hypotheses of (i), the set  $\mathcal{U}_k$ is dense in $  X^k$ whenever it is nonempty.

\end{thm}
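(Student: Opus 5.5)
We prove (i) by showing that the complement of $\mathcal{U}_k$ in $X^k$ is closed. The natural tool is a Hausdorff finite-dimensional argument, with no norm available on $X$. Fix $(x_1,\ldots,x_k)\in\mathcal{U}_k$ and let $E=\operatorname{span}\{x_1,\ldots,x_k\}$, a $k$-dimensional subspace of $X$. The map
$$\Phi:\mathbb{F}^k\times X^k\to X,\qquad \Phi(\lambda,y)=\sum_{i=1}^k \lambda_i y_i$$
(with the scalars on the appropriate side, left or right) is continuous, because addition and scalar multiplication are continuous in a TVS. Let $S=\{\lambda\in\mathbb{F}^k:\sum_i|\lambda_i|^2=1\}$. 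This set is compact, since $\mathbb{F}^k$ is a finite-dimensional real space. Linear independence of $(x_i)$ says exactly that $0\notin\Phi(S\times\{x\})$.

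By continuity, for each $\lambda\in S$ we look for neighbourhoods $W_\lambda$ of $\lambda$ and $V_\lambda$ of $x$ with $0\notin\Phi(W_\lambda\times V_\lambda)$. Such neighbourhoods exist because $X\setminus\{0\}$ is open, which uses the Hausdorff (indeed $T_1$) property, and $\Phi$ is jointly continuous. Finitely many $W_{\lambda}$ cover $S$ by compactness. Intersecting the corresponding $V_\lambda$ gives a product neighbourhood $V$ of $x$ with $0\notin\Phi(S\times V)$.

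For $y\in V$, a nontrivial relation $\sum\lambda_iy_i=0$ could be normalised so that $\lambda\in S$; normalisation is valid on the appropriate side because $\mathbb{F}$ is a division ring and $|\cdot|$ is multiplicative. That is impossible, so $V\subseteq\mathcal{U}_k$ and $\mathcal{U}_k$ is open. The normed statement follows immediately, since the norm topology's product neighbourhoods contain max-norm balls. This tube-lemma step is the only real obstacle: one must use joint continuity together with compactness of the sphere rather than any norm on $X$.

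For (ii), suppose $\mathcal{U}_k\neq\emptyset$ and pick independent $z_1,\ldots,z_k$. Given any $(x_1,\ldots,x_k)\in X^k$, consider $y(t)=(x_i+tz_i)_i$ for real $t$. Restricted to the finite-dimensional space $E'=\operatorname{span}\{x_i,z_i\}$, linear dependence of $\{y_i(t)\}$ amounts to the vanishing of all $k\times k$ minors of the coordinate matrix $A+tB$. Here $B$ has full rank $k$.

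In the commutative cases $\mathbb{R}$ and $\mathbb{C}$, some $k\times k$ minor of $A+tB$ is a polynomial in $t$ whose $t^k$ coefficient is a nonzero minor of $B$. Hence it has finitely many roots. In the quaternionic case we pass to the complex (or real) representation of $\mathbb{H}$-matrices: $\mathbb{H}^m\cong\mathbb{C}^{2m}$, and $\mathbb{H}$-rank $k$ corresponds to complex rank $2k$. We then apply the same polynomial argument with real $t$, which is central.

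So the $y(t)$ are independent for all but finitely many real $t$. Since $y(t)\to x$ as $t\to0$ by continuity of scalar multiplication, every neighbourhood of $x$ meets $\mathcal{U}_k$, which gives the density claimed in (ii).
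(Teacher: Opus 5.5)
Your proposal is correct.

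\textbf{Part (i).} Your argument is essentially the paper's. Both rest on the compactness of the normalised coefficient set, the joint continuity of $(\lambda,y)\mapsto\sum_i\lambda_i y_i$, and the closedness of $\{0\}$. The paper argues by contradiction with nets: it normalises so that $\max_i|a_{ij}|=1$, then passes to a subnet on which $|a_{1j}|=1$ and the coefficients converge. You instead run a tube-lemma/finite-subcover argument over the sphere $S$. These are two presentations of the same idea.

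\textbf{Part (ii).} Here your route genuinely differs from the paper's. The paper inducts on $k$. Having chosen independent $y_1,\ldots,y_k$ in the prescribed neighbourhoods $G_1,\ldots,G_k$, it observes that the non-empty open set $G_{k+1}$ cannot lie in $\operatorname{span}\{y_1,\ldots,y_k\}$. Otherwise that subspace would have non-empty interior, hence equal $X$, forcing $\dim X=k$. So some $y_{k+1}\in G_{k+1}$ lies outside the span.

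You instead move all the vectors simultaneously along $x_i+tz_i$ with $t$ real. You then exhibit a minor that is a polynomial in $t$ of exact degree with non-zero leading coefficient. In the commutative case this is a $k\times k$ minor. For $\mathbb{H}$ it is a $2k\times 2k$ minor of the complex representation, which is legitimate because that representation is $\mathbb{R}$-linear and $t$ is central.

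\textbf{Comparison.} The paper's argument is coordinate-free and treats $\mathbb{R}$, $\mathbb{C}$, $\mathbb{H}$ uniformly. It uses only the fact that proper subspaces of a topological vector space have empty interior. Yours costs a coordinate computation and a separate quaternionic step. In return it gives a stronger, Zariski-type conclusion: for any $z\in\mathcal{U}_k$ and any $x\in X^k$, the line $\{x+tz: t\in\mathbb{R}\}$ leaves $\mathcal{U}_k$ at only finitely many points. That is more than bare density.
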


\begin{proof}
(i) It suffices to prove the first assertion. To this end, we proceed by way of contradiction and suppose that there exists an $ x = (x_1 , \ldots , x_k ) \in \mathcal{U}_k $ which is not an interior point of $ \mathcal{U}_k $. It follows that there exist a directed set $J$ and nets
$(x_{ij})_{j \in J}$ with $ x_{ij} \in X$ ($ 1 \leq i \leq k$)  and
$ \lim_J x_{ij} = x_i $ ($ 1 \leq i \leq k$) such that $ \{ x_{1j}, \ldots , x_{kj} \}$ is linearly dependent for all
$ j \in J$. Indeed, a directed set $J$ is the set of all neighborhoods of $x$ in $X^k$ partially ordered by reverse inclusion. It follows that for each $ j \in J$,
there are $ a_{ij} \in \mathbb{F}$
($ 1 \leq i \leq k$) with $\max_{1 \leq i \leq k} |a_{ij} | = 1$ such that
$$ a_{1j} x_{1j} + \cdots +  a_{kj} x_{kj} = 0.$$
    Now, if necessary,  by renaming  $x_i$'s ($ 1 \leq i \leq k$) and passing to proper subnets,
we may assume that $\max_{1 \leq i \leq k} |a_{ij} | = 1= |a_{1j}|$ for all $ j \in J$ and that $ \lim_J a_{ij}  = a_i \in \mathbb{F}$ exists for all $ 1 \leq i \leq k$.
Thus, we get that
\begin{eqnarray*}
 	0 & = & \lim_{J} \big(a_{1j} x_{1j} + \cdots +  a_{kj} x_{kj}\big), \\
  	& = &  a_1 x_1 + \cdots + a_k x_k,
\end{eqnarray*}
implying that $ \{ x_1 , \ldots, x_k \}$ is linearly dependent, for $|a_1| = 1$, which is a contradiction. This completes the proof.

(ii) We prove the assertion by induction on $k$. The proof is easy if $ k=1$. Assuming that the assertion holds for $k$, we prove it for $ k+1$. To this end, let  $\mathcal{U}_{k+1} \not= \emptyset$  and  $ (x_1 , \ldots, x_{k+1}) \in   X^{k+1}$ be arbitrarily given. We need to show that every neighborhood of  $ (x_1 , \ldots, x_{k+1}) \in   X^{k+1}$ intersects  $\mathcal{U}_{k+1}$. In other words, given open sets  $G_i$ with $ x_i \in G_i$   ($1 \leq i \leq k+1$)  arbitrarily, there are $ y_i \in G_i$ ($1 \leq i \leq k+1$) such that $ (y_1, \ldots, y_{k+1}) \in \mathcal{U}_{k+1}$. Since  $\mathcal{U}_{k} \not= \emptyset$, by the inductive hypothesis,  there exist  $y_i \in G_i$   ($1 \leq i \leq k$)
 such that $  (y_1, \ldots, y_k) \in \mathcal{U}_k$. Note that $G_{k+1} \not\subseteq {\rm span}\{y_1, \ldots, y_k \} $, for otherwise  $ X = {\rm span}\{y_1, \ldots, y_k \}$ because $X$ is a topological vector space.  This, in turn,  yields $ k+1 \leq \dim X=  k$, which is impossible. Thus,  there exists a $ y_{k + 1} \in G_{k+ 1}$ such that $ (y_1 , \ldots, y_{k+1}) \in  \mathcal{U}_{k+1}$, as desired.
\end{proof}

\begin{thm} \label{5.2}
Let   $ \mathbb{F} \in \{ \mathbb{R}, \mathbb{C}\}$, $ k \in \mathbb{N}$,  and $\mathbb{A} $ be an arbitrary topological $\mathbb{F}$-algebra.  Suppose further that $A_i \in \mathbb{A}$ $( 1 \leq i \leq k)$ are such that  $ {\rm Alg}_\mathbb{F}  \big(\{  A_1 , \ldots, A_k \}\big) $ is finite-dimensional. Then,  there exist open sets $ \mathcal{U}_i \subseteq \mathbb{A} $ with  $A_i \in \mathcal{U}_i$ such that
 \[\dim {\rm Alg}_\mathbb{F}  \big(\{  X_1 , \ldots, X_k \} \big) \geq \dim {\rm Alg}_\mathbb{F}  \big(\{  A_1 , \ldots, A_k \}\big)\]
  whenever  $ X_i \in \mathcal{U}_i$ for all $ 1 \leq i \leq k$.

  In particular, if $(\mathbb{A}, \|.\|) $ is an arbitrary normed $\mathbb{F}$-algebra, then there exists an $ \varepsilon > 0$ such that
\[ \dim {\rm Alg}_\mathbb{F}  \big(\{  X_1 , \ldots, X_k \} \big) \geq \dim {\rm Alg}_\mathbb{F}  \big(\{  A_1 , \ldots, A_k \}\big)\]
 whenever
$$\max_{ 1 \leq i \leq k}  \| X_i - A_i\|  < \varepsilon.$$
\end{thm}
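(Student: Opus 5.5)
Let $m := \dim {\rm Alg}_\mathbb{F}(\{A_1,\ldots,A_k\})$. The plan is to reduce the statement to the openness of linear independence in Theorem \ref{5.1}(i). If $m=0$ there is nothing to prove, so assume $m \geq 1$.

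\textbf{Step 1: encode a basis by fixed words.} Every element of ${\rm Alg}_\mathbb{F}(\{A_1,\ldots,A_k\})$ is a linear combination of (nonassociative) monomials in $A_1,\ldots,A_k$. Each such monomial is a fully parenthesised word $w(t_1,\ldots,t_k)$ in noncommuting, nonassociative indeterminates, evaluated at $t_i = A_i$. Since the algebra has dimension $m$ and is spanned by these monomials, I will choose finitely many words $w_1,\ldots,w_m$ such that
\[
w_1(A_1,\ldots,A_k),\ \ldots,\ w_m(A_1,\ldots,A_k)
\]
are linearly independent, and hence form a basis.

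\textbf{Step 2: continuity of word evaluation.} Each evaluation map $\Phi_j : \mathbb{A}^k \to \mathbb{A}$, $(X_1,\ldots,X_k)\mapsto w_j(X_1,\ldots,X_k)$, is continuous. This follows by induction on word length, since multiplication $\mathbb{A}\times\mathbb{A}\to\mathbb{A}$ is jointly continuous in a topological algebra. Therefore $\Phi := (\Phi_1,\ldots,\Phi_m) : \mathbb{A}^k \to \mathbb{A}^m$ is continuous for the product topologies.

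\textbf{Step 3: pull back an open set.} By Theorem \ref{5.1}(i), applied to the topological vector space $\mathbb{A}$, the set $\mathcal{U}_m \subseteq \mathbb{A}^m$ of linearly independent $m$-tuples is open. Hence $\Phi^{-1}(\mathcal{U}_m)$ is an open subset of $\mathbb{A}^k$ containing $(A_1,\ldots,A_k)$. By definition of the product topology it contains a basic open box $\mathcal{U}_1\times\cdots\times\mathcal{U}_k$ with $A_i \in \mathcal{U}_i$.

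For $X_i \in \mathcal{U}_i$, the elements $w_j(X_1,\ldots,X_k)$, $1\le j\le m$, are then linearly independent and lie in ${\rm Alg}_\mathbb{F}(\{X_1,\ldots,X_k\})$. This gives the required inequality, including the case where the latter algebra is infinite-dimensional.

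\textbf{Normed case.} A normed algebra is a topological algebra under the norm topology. Moreover, boxes of norm-balls form a neighbourhood base in $\mathbb{A}^k$, so an $\varepsilon$ with $\max_i\|X_i-A_i\|<\varepsilon$ exists. If one prefers not to assume continuity of multiplication in the norm topology, it can be verified directly from $\|XY\|\le\|X\|\|Y\|$.

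\textbf{Main obstacle.} The only delicate point is Step 1 in the nonassociative setting: I must make sure the monomials really span the algebra, and that only finitely many of them are needed. The spanning holds because the generated algebra is the linear span of all iterated products, and a finite-dimensional space spanned by a set contains a basis drawn from that set. Everything else is routine continuity.
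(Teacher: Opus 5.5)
Your proof is correct and follows the paper's argument essentially step for step. You pick $d$ monomials whose evaluations at $(A_1,\ldots,A_k)$ form a basis, observe that the evaluation map $\mathbb{A}^k\to\mathbb{A}^d$ is continuous, and pull back the open set of linearly independent $d$-tuples from Theorem~\ref{5.1}(i) to obtain a product neighbourhood. Your use of fully parenthesised words is, if anything, slightly more careful than the paper's ``noncommutative monomials,'' since $\mathbb{A}$ need not be associative.
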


\begin{proof} Clearly, it suffices to prove the first assertion.   Let $$ d = \dim {\rm Alg}_\mathbb{F}  \big(\{  A_1 , \ldots, A_k \}\big).$$
It follows that  there are monic noncommutative monomials $ m_i$'s, respectively, of total degrees $n_i $'s  ($ 1 \leq i \leq d$) in the indeterminates $ x_1, \ldots , x_k$    such that
$$ \big\{  m_1( A_1 , \ldots, A_k) , \ldots,  m_d ( A_1 , \ldots, A_k) \big\} $$
is linearly independent. Here, naturally, $m_i( A_1 , \ldots, A_k)$ stands for the the monomial $m_i$ evaluated at $( A_1 , \ldots, A_k) \in \mathbb{A}^k $
 ($ 1 \leq i \leq d$).
By the preceding theorem, there exist open sets $ \mathcal{V}_i \subseteq \mathbb{A}$ with  $ m_i( A_1 , \ldots, A_k) \in \mathcal{V}_i $ ($ 1 \leq i \leq d$) such that $ \{ B_1 , \ldots, B_d\} \subseteq \mathbb{A}$ is linearly independent whenever  $( B_1 , \ldots, B_d)  \in \mathcal{V}_1 \times \cdots \times \mathcal{V}_d $.
Define $ m :  \mathbb{A}^k \longrightarrow  \mathbb{A}^d$ by
\[m(X) = ( m_1 (X) , \ldots, m_d(X) ),\] where $ X = (X_1 , \ldots, X_k)$. Equip  $\mathbb{A}^k$ and $\mathbb{A}^d$ with the product topologies. Since $\mathbb{A} $ is a topological algebra, a straightforward induction on the total degree of $m_i$  reveals that every monomial function $ m_i    :  \mathbb{A}^k \longrightarrow  \mathbb{A}$ ($ 1 \leq i \leq d$) is continuous. From this, we get that the function $m = (m_1 , \ldots, m_d)$ is continuous.  But $ \mathcal{V}_1 \times \cdots  \times \mathcal{V}_d $ is open in  $\mathbb{A}^d$, $ m_i(A) \in \mathcal{V}_i $ for each $ i =1 , \ldots, d$, and the function $ m :  \mathbb{A}^k \longrightarrow  \mathbb{A}^d$ is continuous at $A = ( A_1 , \ldots, A_k)$. It thus follows that there are open sets  $ \mathcal{U}_j $ in   $\mathbb{A}$ with $ A_j \in \mathcal{U}_j $
($1 \leq j \leq k$) such that $m( X_1 , \ldots, X_k) \in  \mathcal{V}_1 \times \cdots  \times \mathcal{V}_d  $  whenever $  ( X_1 , \ldots, X_k) \in \mathcal{U}_1 \times \cdots \times \mathcal{U}_k$.  Now, letting $ X = ( X_1 , \ldots, X_k) \in \mathbb{A}^k $, we see that
 $ \big\{   m_1 (X) , \ldots, m_d(X)  \big\} $ is linearly independent in $\mathbb{A}$ whenever $ X  \in \mathcal{U}_1 \times \cdots \times \mathcal{U}_k $ .  This, in turn, yields
$$ \dim {\rm Alg}_\mathbb{F}  \big(\{  X_1 , \ldots, X_k \} \big) \geq d = \dim {\rm Alg}_\mathbb{F}  \big(\{  A_1 , \ldots, A_k \}\big)$$
whenever  $  ( X_1 , \ldots, X_k) \in \mathcal{U}_1 \times \cdots \times \mathcal{U}_k$, completing the proof.
\end{proof}

In finite dimensions, we can assert a bit more.

\begin{thm} \label{5.3}
{\rm (i)}
Let   $ \mathbb{F} \in \{ \mathbb{R}, \mathbb{C}\}$, $ k \in \mathbb{N}$,  and $\mathbb{A} $ be an arbitrary  finite-dimensional  $\mathbb{F}$-algebra endowed with any vector space topology.  Then, given  $A_i \in \mathbb{A}$ $( 1 \leq i \leq k)$, there exist  open sets $ \mathcal{U}_i \subseteq \mathbb{A} $ with  $A_i \in \mathcal{U}_i$ such that $ \dim {\rm Alg}_\mathbb{F}  \big(\{  X_1 , \ldots, X_k \} \big) \geq \dim {\rm Alg}_\mathbb{F}  \big(\{  A_1 , \ldots, A_k \}\big)$  whenever  $ X_i \in \mathcal{U}_i$ for all $ 1 \leq i \leq k$. In particular, if ${\rm Alg}_\mathbb{F}  \big(\{  A_1 , \ldots, A_k \}\big) = \mathbb{A}$, then
\[
{\rm Alg}_\mathbb{F}  \big(\{  X_1 , \ldots, X_k \} \big)  = \mathbb{A} \]
whenever  $ X_i \in \mathcal{U}_i$ for all $ 1 \leq i \leq k$.

{\rm (ii)} Let   $ \mathbb{F} \in \{ \mathbb{R}, \mathbb{C}\}$, $ k \in \mathbb{N}$,  $\mathbb{A} $ be an arbitrary finite-dimensional  $\mathbb{F}$-algebra, and $ \| .\| $ any vector space norm on $\mathbb{A} $. Suppose that $A_i \in \mathbb{A}$ $( 1 \leq i \leq k)$ are such that ${\rm Alg}_\mathbb{F}  \big(\{  A_1 , \ldots, A_k \}\big)= \mathbb{A}$. Then, there exists an $ \varepsilon > 0$ such that ${\rm Alg}_\mathbb{F}  \big(\{  X_1 , \ldots, X_k \} \big)  = \mathbb{A}$ whenever
$$\max_{ 1 \leq i \leq k}  \| X_i - A_i\|  < \varepsilon.$$
\end{thm}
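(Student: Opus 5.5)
The plan is to reduce part (i) to Theorem~\ref{5.2}. First I would check that its hypotheses hold automatically. Since $\mathbb{A}$ is finite-dimensional, the subalgebra ${\rm Alg}_\mathbb{F}\big(\{A_1,\ldots,A_k\}\big)$ is a subspace of $\mathbb{A}$ and hence finite-dimensional. Moreover, by the remark preceding Theorem~\ref{5.1} (relying on \cite[Theorem 1.21]{Rudin1991} and \cite[Theorem 1.1(i)]{Yahaghi2024}), $\mathbb{A}$ equipped with any vector space topology is a topological $\mathbb{F}$-algebra. One point deserves care: that remark presupposes the topology is Hausdorff, as the definition of topological algebra requires. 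If the given vector space topology is not Hausdorff, I would handle it directly. I would note that Theorem~\ref{5.1}(i) needs Hausdorffness only to make limits unique in the final step, and that on a finite-dimensional space the relevant continuity of multiplication still follows once we pass to the Hausdorff case. I expect the intended reading to be that the topology is Hausdorff, so that Theorem~\ref{5.2} applies verbatim. This gives open sets $\mathcal{U}_i\ni A_i$ with
\[
\dim {\rm Alg}_\mathbb{F}\big(\{X_1,\ldots,X_k\}\big)\ \ge\ \dim {\rm Alg}_\mathbb{F}\big(\{A_1,\ldots,A_k\}\big)
\]
whenever $X_i\in\mathcal{U}_i$ for every $i$.

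For the ``in particular'' clause of (i), suppose ${\rm Alg}_\mathbb{F}\big(\{A_1,\ldots,A_k\}\big)=\mathbb{A}$. Then for $X_i\in\mathcal{U}_i$ the subalgebra ${\rm Alg}_\mathbb{F}\big(\{X_1,\ldots,X_k\}\big)$ is a subspace of $\mathbb{A}$ whose dimension is at least $\dim\mathbb{A}<\infty$. It therefore equals $\mathbb{A}$. This is the only place where finite-dimensionality of $\mathbb{A}$ itself, as opposed to the generated subalgebra, is essential.

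Part (ii) then follows as a special case. A norm on $\mathbb{A}$ induces a Hausdorff vector space topology, so (i) provides open sets $\mathcal{U}_i\ni A_i$ with the stated property. I would then choose $\varepsilon>0$ small enough that the open ball of radius $\varepsilon$ about $A_i$ lies inside $\mathcal{U}_i$ for each $i$, and take the minimum of these radii over $i=1,\ldots,k$. I do not anticipate any real obstacle. The only delicate point is confirming that the given topology makes multiplication continuous, and that is exactly the cited fact that every finite-dimensional algebra with a Hausdorff vector space topology is a topological algebra. If needed, this can also be seen directly: all Hausdorff vector space topologies on a finite-dimensional space coincide with the Euclidean one, and bilinear maps are continuous in that topology.
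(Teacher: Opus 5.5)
Your argument matches the paper's: it too reduces (i) to Theorem~\ref{5.2} via the remark that a finite-dimensional algebra with any vector space topology is a topological algebra, and leaves the ``in particular'' clause and (ii) as immediate consequences, which you verify correctly. Drop the aside about handling a non-Hausdorff topology ``directly'': the statement is false there, since in the indiscrete topology the only open set containing $A_i$ is $\mathbb{A}$, and taking all $X_i=0$ gives a smaller algebra whenever some $A_i\neq 0$, so the Hausdorff reading of ``vector space topology'' (as in Rudin's definition) is necessary, not merely intended.
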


\begin{proof}It suffices to prove (i). As pointed out before Theorem \ref{5.1}, from  \cite[Theorem 1.21]{Rudin1991} and \cite[Theorem 1.1(i)]{Yahaghi2024} we get that any finite-dimensional $ \mathbb{F} $-algebra together with any vector space topology is indeed a topological  $ \mathbb{F}$-algebra. Now, the assertion immediately follows from Theorem \ref{5.2}.
\end{proof}


\begin{thebibliography}{999}


\vspace{1mm}
\bibitem{Albert1947}
A.A. Albert, Absolute valued real algebras, {\it Ann. of Math.} 48 (1947), 495-501.


\vspace{1mm}
\bibitem{GZ}
G.R. Allan and  J. Zemanek,
Invariant subspaces for pairs of projections.
J. London Math. Soc. (2) 57 (1998), no. 2, 449--468.

\vspace{1mm}
\bibitem{CabreraRodriguezPalacios2014}
M. Cabrera  and  A.   Rodr\'iguez, {\it Non-Associative Normed Algebras,  Vol. I: The Vidav-Palmer and
Gelfand-Naimark Theorems}, Cambridge University Press, Cambridge, 2014.

\vspace{1mm}
\bibitem{Fillmore1969.02}
P. Fillmore, On similarity and the diagonal of a matrix, Am. Math. Mon. 76, 167-169 (1969).


\vspace{1mm}
\bibitem{Fischer}
D. Fischer,
{\it https://math.stackexchange.com/questions/1685682/regarding-linear-independence-on-a-normed-linear-space-given-a-condition}, April 2017.


\vspace{1mm}
\bibitem{GLS}
F.J. Gaines, T.J. Laffey, and H.M. Shapiro,  Pairs of matrices with quadratic minimal polynomials. Linear Algebra Appl. 52/53 (1983), 289--292.



\vspace{1mm}
\bibitem{MarcouxRadjaviZhang2022}
L.W. Marcoux, H. Radjavi, and Y. Zhang,
 Dispersing representations of  semi-simple subalgebras of complex matrices,  Linear Algebra Appl. 642  (2022),
160-220.



\vspace{1mm}
\bibitem{NordgrenRadjabalipourRadjaviRosenthal1988}
E. Nordgren, M. Radjabalipour, H. Radjavi, and P. Rosenthal, Quadratic operators and invariant subspaces, Studia Math. 88 (1988) 263--268.



\vspace{1mm}
\bibitem{RadjaviRosenthal1970}
H. Radjavi and P. Rosenthal, Matrices for operators and generators of $B (\mathcal{H})$, J. London Math. Soc. (2), 2 (1970), 557-560.



\vspace{1mm}
\bibitem{RadjaviRosenthal2000}
H. Radjavi and P. Rosenthal, {\it Simultaneous Triangularization}, Universitext, Springer-Verlag, New York,  2000.


\vspace{1mm}
\bibitem{Rudin1991}
W. Rudin, {\it Functional Analysis}, 2nd edition, McGraw-Hill, Inc.,  New York, 1991.

\vspace{1mm}
\bibitem{Yahaghi2017}
B.R. Yahaghi, Burnside type theorems in real and quaternion settings, {\em arXiv: 1710.03849v2}, 2017.


\vspace{1mm}
\bibitem{Yahaghi2024}
B.R. Yahaghi, Extensions of the fundamental theorem of algebra,  {\em  arXiv:2203.14689v5}, 2024.

\vspace{1mm}
\bibitem{Yahaghi2025}
B.R. Yahaghi, Spectrum in alternative topological algebras and a new look at old theorems,  {\em  arXiv:2401.00985v6}, 2025.


\end{thebibliography}
\end{document}